\documentclass[12pt]{amsart}
\usepackage{amssymb,latexsym,amsmath,amsthm,amscd}
\usepackage{setspace}
\usepackage{verbatim}
\usepackage[active]{srcltx}
\usepackage[all]{xy} \xyoption{arc}
\usepackage[left=3cm,top=2cm,right=3cm,bottom = 2cm]{geometry}
\usepackage{graphicx}
\usepackage[usenames,dvipsnames]{color}
\usepackage{charter}

\theoremstyle{plain}
\newtheorem{thm}{Theorem}[section]
\newtheorem{lem}[thm]{Lemma}
\newtheorem{prop}[thm]{Proposition}

\theoremstyle{definition}
\newtheorem{dfn}[thm]{Definition}
\newtheorem{ex}[thm]{Example}

\theoremstyle{remark}
\newtheorem{rmk}[thm]{Remark}


\newcommand{\cO}{\mathcal{O}}

\newcommand{\cS}{\mathcal{S}}

\newcommand{\cV}{\mathcal{V}}






\DeclareMathOperator{\wt}{wt}

\DeclareMathOperator{\Tr}{Tr}

\DeclareMathOperator{\SL}{SL}
\DeclareMathOperator{\PSL}{PSL}

\newcommand*{\df}{\mathrel{\vcenter{\baselineskip0.5ex \lineskiplimit0pt
                     \hbox{\scriptsize.}\hbox{\scriptsize.}}} =}


\providecommand{\abs}[1]{\left\lvert#1\right\rvert}
\providecommand{\twomat}[4]{\left(\begin{matrix}#1&#2\\#3&#4\end{matrix}\right)}

\providecommand{\twovec}[2]{\left(\begin{matrix}#1\\#2\end{matrix}\right)}


\newcommand{\QQ}{\mathbf{Q}}
\newcommand{\FF}{\mathbf{F}}
\newcommand{\CC}{\mathbf{C}}

\newcommand{\ZZ}{\mathbf{Z}}

\newcommand{\NN}{\mathbf{N}}

\newcommand{\cVbar}{\overline{\cV}}

\newcommand{\cSbar}{\overline{\cS}}

\pagenumbering{arabic}
\pagestyle{headings}
\setcounter{secnumdepth}{4}
\setcounter{tocdepth}{2}
\setlength{\parindent}{1cm} 

\begin{document}
\title{On the structure of modules of vector valued modular forms}
\author{Cameron Franc and Geoffrey Mason}
\date{}

\begin{abstract}
If $\rho$ denotes a finite dimensional complex representation of $\SL_2(\ZZ)$, then it is known that the module $M(\rho)$ of vector valued modular forms for $\rho$ is free and of finite rank over the ring $M$ of scalar modular forms of level one. This paper initiates a general study of the structure of $M(\rho)$. Among our results are absolute upper and lower bounds, depending only on the dimension of $\rho$, on the weights of generators for $M(\rho)$, as well as upper bounds on the multiplicities of weights of generators of $M(\rho)$. We provide evidence, both computational and theoretical, that a stronger three-term multiplicity bound might hold. An important step in establishing the multiplicity bounds is to show that there exists a free-basis for $M(\rho)$ in which the matrix of the modular derivative operator does not contain any copies of the Eisenstein series $E_6$ of weight six.
\end{abstract}
\maketitle

\setcounter{tocdepth}{1}
\tableofcontents

\setlength{\parskip}{2mm}

\section{Introduction}
If $\rho$ is a finite-dimensional complex representation of $\SL_2(\ZZ)$ of dimension $d$, then the module $M(\rho)$ of vector valued modular forms for $\rho$ is known \cite{MM}, \cite{CandeloriFranc} to be free of rank $d$ over the ring $M$ of classical scalar modular forms of level one.\ A basic problem about $M(\rho)$ is then to determine the weights of a generating set of modular forms in this module.\ These are invariants of the isomorphism class of $\rho$.\ In \cite{CandeloriFranc} it was observed that this question is tantamount to determining the decomposition of a certain vector bundle $\cVbar_0(\rho)$ on the moduli stack of elliptic curves into line bundles.\ In dimension less than six, some results on this questions have been obtained by Marks \cite{Marks}, but otherwise very little has been proved about the general situation. This paper initiates a general study of this question.

We begin in Section \ref{s:weightprofiles} by introducing the \emph{weight profile} of $\rho$, which is the tuple $(k_1,\ldots, k_d)$ of weights of generators for $M(\rho)$, ordered so that $k_i \leq k_{i+1}$ for all $i$. In \cite{Mason} one finds a proof that $1-d \leq k_1$ for irreducible representations $\rho$. Using a slight generalization of this argument, combined with Serre duality, we show (Lemma \ref{l:weightprofilebounds}) that there is an upper bound  $k_d \leq d+10$ as well.\ In particular, there are only finitely many weight profiles for irreducible representations in each dimension.\ Section \ref{s:weightprofiles} explains how Proposition \ref{p:dimension} and Westbury's description of the character variety of $\SL_2(\ZZ)$ (recalled as Theorem \ref{t:westbury} below) can be used to enumerate a finite list containing all weight profiles of irreducible representations of $\SL_2(\ZZ)$ of fixed dimension (and possibly some weight profiles that do not occur in practice) that is considerably shorter than the finite list provided by the weight bounds $1-d \leq k_1 \leq k_d \leq d+10$. Section \ref{s:examples} contains the results of some of these computations.

Sections \ref{s:Rmodule} and \ref{s:multiplicitybounds} use the differential structure of $M(\rho)$ afforded by the modular derivative to deduce further information about the weight profile of $\rho$.\ The most notable new results are the no-gap Lemma (Lemma \ref{lemmanogap}) and the weight multiplicity bound of Theorem \ref{t:multiplicitybounds}.\ The no-gap Lemma states that if $\rho$ is irreducible, then no gap larger than two occurs in its weight profile.\ If $m_1,\ldots, m_r$ denote the multiplicities of the distinct weights in the weight profile of an irreducible representation $\rho$, then the weight multiplicity bound states that
\[
  m_j \leq \sum_{t \geq 0} m_{j+1-2t}\ \mbox{and}\   m_j \leq \sum_{t \geq 0}m_{j-1+2t},
\]
where $m_i = 0$ if $i < 1$ or $i > r$.\ Since the tuple $\pi_\rho = (m_1,\ldots, m_r)$ is an ordered partition of $d$, this implies in particular that $m_j \leq d/2$ for all $j$.\ The proof of Theorem \ref{t:multiplicitybounds} seems to be new and of considerable interest. It uses the fact, proved in Theorem \ref{l:goodbasis}, that there exists a choice of basis for $M(\rho)$ in which the matrix of the modular derivative does not contain any copies of the weight $6$ Eisenstein series $E_6$.

Aside from the intrinsic interest of the weight profiles of representations of $\SL_2(\ZZ)$, there would be practical benefits to understanding them better.\ For example, if $\rho$ denotes the permutation representation of $\SL_2(\ZZ)$ acting on the cosets of some finite index subgroup $\Gamma$, then $M_k(\rho) \cong M_k(\Gamma)$, where $M_k(\Gamma)$ denotes the space of scalar modular forms for $\Gamma$ of weight $k$.\ If $\rho$ decomposes into irreducible representations $\rho \cong \bigoplus_i \rho_i$, then one obtains a corresponding decomposition $M_k(\Gamma) \cong \bigoplus_i M_k(\rho_i)$.\ Thus, for example,
\begin{equation}
\label{eq:wt1}
  \dim M_1(\Gamma) = \sum_i \dim \rho_i\cdot m_1(\rho_i),
\end{equation}
where $m_1(\rho_i)$ denotes the multiplicity of the weight $1$ in the weight profile of $\rho_i$ (here we've used the fact that the weight profile of a representation of finite image consists of positive integers).

We explored the idea of using the decomposition (\ref{eq:wt1}) and the results of \cite{CandeloriFranc} to study the dimensions of spaces of modular forms of weight one on $\Gamma(p)$ for a prime $p$.\ We were pleased to observe that when $p \equiv 3 \pmod{4}$, and if $\rho_1$ and $\rho_2$ denote the irreducible representations of $\SL_2(\FF_p)$ obtained as certain constituents of reducible principal series representations, then the Euler characteristics of the corresponding vector bundles $\cVbar_0(\rho_i)$ equal $\frac 12(1 \pm h(-p))$, where $h(-p)$ denotes the class number of $\QQ(\sqrt{-p})$. Using this, it is not too hard to show that $\dim M_1(\rho_i) \geq \frac 12(1 + h(-p))$. This elementary argument detects the dihedral theta series of weight one without writing them down explicitly, and is presumably well-known to experts\footnote{Nevertheless, it is worth remarking that the term $h(-p)$ in the Euler characteristic arises via the exponents of $\rho_i(T)$ through Dirichlet's analytic class number formula.}. Unfortunately, the elementary arguments that \cite{CandeloriFranc} enables do not, by themselves, shed any new light on the question of dimensions of spaces of modular forms of weight one.\ However, the question of the module structure of $M(\rho)$ seems to be a richer one, and a methodical study of $M(\rho)$ for general $\rho$ could conceivably lead to a better understanding of scalar forms that have so far resisted available techniques.\ This is part of the impetus that drove this work. 

Let us conclude the introduction by describing our notation and conventions.

In this note $\rho$ will always denote a finite-dimensional complex representation of $\SL_2(\ZZ)$, usually irreducible.\ It will often be convenient to assume that $\rho(S^2)$ is a scalar.\ Then necessarily $\rho(S^2) = \pm I$. If $\rho(S) = I$ then $\rho$ is said to be \emph{even}, while if $\rho(S) = -I$ then $\rho$ is said to be \emph{odd}.\ If $\rho$ is even or odd, then the weights of nonzero vector valued modular forms for $\rho$ must have the same parity as $\rho$.\ Note that all irreducible representations of $\SL_2(\ZZ)$ are either even or odd. 
The notation $\rho^\vee$ denotes the dual representation of $\rho$. Let
\[
  T = \twomat 1101, \quad\quad S = \twomat{0}{-1}{1}{0}, \quad\quad R = ST = \twomat{0}{-1}{1}{1}.
\]
Let $\chi$ denote the character of $\eta^2$, so that $\chi(T) = e^{2\pi i/12}$. Write $\xi = e^{2\pi i/6}$. If $L$ is a matrix such that $\rho(T) = e^{2\pi i L}$, then we call $L$ a \emph{choice of exponents} for $\rho$. Recall that since $\det e^M = e^{\Tr(M)}$, the quantity $12\Tr(L)$ is an integer for any choice of exponents $L$ for $\rho$. Let $\cVbar_{k,L}(\rho)$ denote the vector bundle introduced in \cite{CandeloriFranc}. If $L$ has eigenvalues with real part in $[0,1)$ then we write simply $\cVbar_{k}(\rho)$ for $\cVbar_{k,L}(\rho)$. Similarly, if the eigenvalues have real part in $(0,1]$ then we write $\cSbar_k(\rho)$ for $\cVbar_{k,L}(\rho)$. The global sections of $\cVbar_k(\rho)$ and $\cSbar_k(\rho)$ are the spaces of weight $k$ vector valued holomorphic modular forms $M_k(\rho)$ and cusp forms $S_k(\rho)$, respectively, for $\rho$. Note that if $L$ and $L_0$ denote choices of exponents for $\rho(T)$ adapted to $[0,1)$ and $(1,0]$, respectively, then $\Tr(L_0) = \Tr(L) + m$ where $m$ is the multiplicity of one as an eigenvalue of $\rho(T)$.

\section{Weight profiles}
\label{s:weightprofiles}
In \cite{CandeloriFranc}, the Euler characterstic of the bundles $\cVbar_k(\rho)$ was computed. When $k$ is large enough, depending on $\rho$, the Euler characteristic agrees with the dimension of $M_k(\rho)$. The following proposition makes this precise.
\begin{prop}
\label{p:dimension}
Let $\rho$ denote an irreducible representation of dimension $d$, let $L$ denote a standard choice of exponents for $\rho(T)$, and let $L_0$ denote a cuspidal choice of exponents for $\rho(T)$. Then one has
\[
  \dim M_k(\rho) = \begin{cases}
0 & k < \frac{12}{d}\Tr(L)+1-d,\\
\chi(\cVbar_k(\rho)) + \dim S_{2-k}(\rho^{\vee}) & \frac{12}{d}\Tr(L)+1-d \leq k \leq \frac{12}{d}\Tr(L) + d-11,\\
\chi(\cVbar_k(\rho)) & k > \frac{12}{d}\Tr(L) + d-11,
  \end{cases}
\]
and
\[
  \dim S_k(\rho) = \begin{cases}
0 & k < \frac{12}{d}\Tr(L_0)+1-d,\\
\chi(\cSbar_k(\rho)) + \dim M_{2-k}(\rho^{\vee}) & \frac{12}{d}\Tr(L_0)+1-d \leq k \leq \frac{12}{d}\Tr(L_0) + d-11,\\
\chi(\cSbar_k(\rho)) & k > \frac{12}{d}\Tr(L_0) + d-11.
  \end{cases}
\] 
\end{prop}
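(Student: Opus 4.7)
The plan is to combine the Riemann--Roch/Euler characteristic computation of $\chi(\cVbar_k(\rho))$ and $\chi(\cSbar_k(\rho))$ from \cite{CandeloriFranc} with Serre duality on the stacky curve $\overline{\cM}_{1,1}$, then split into the three ranges of $k$ using weight lower bounds applied to both $\rho$ and $\rho^\vee$.

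For Serre duality, the dualizing sheaf of $\overline{\cM}_{1,1}$ is $\omega^{\otimes 2}(-\infty)$ by Kodaira--Spencer, where $\omega$ is the Hodge bundle and $\infty$ is the cusp. Away from $\infty$, the dual of $\cVbar_k(\rho)$ is the natural bundle attached to $\rho^\vee$ in weight $-k$. At the cusp, dualization swaps the standard exponent convention $[0,1)$ with the cuspidal convention $(0,1]$, while the twist by $\omega^{\otimes 2}(-\infty)$ shifts the weight by $2$ and enforces vanishing at the cusp. Combining these local pieces, I would obtain
\[
  H^1\!\bigl(\overline{\cM}_{1,1},\cVbar_k(\rho)\bigr)^\vee \cong S_{2-k}(\rho^\vee),\qquad H^1\!\bigl(\overline{\cM}_{1,1},\cSbar_k(\rho)\bigr)^\vee \cong M_{2-k}(\rho^\vee).
\]
Together with $\dim H^0 - \dim H^1 = \chi$, this yields the middle-range identities $\dim M_k(\rho) = \chi(\cVbar_k(\rho)) + \dim S_{2-k}(\rho^\vee)$ and $\dim S_k(\rho) = \chi(\cSbar_k(\rho)) + \dim M_{2-k}(\rho^\vee)$, valid for all $k$.

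To split into the three cases I would then invoke two weight lower bounds. The lower-range vanishing is a sharpened Mason-type bound: for irreducible $\rho$ of dimension $d$, the lowest weight of a generator of $M(\rho)$ is at least $\frac{12}{d}\Tr(L) + 1 - d$, a refinement of the bound $k_1 \geq 1-d$ of \cite{Mason} that tracks $\Tr(L)$ explicitly via the formula for $\deg \cVbar_0(\rho)$ from \cite{CandeloriFranc} and the freeness of $M(\rho)$ as a rank-$d$ module over $M = \CC[E_4,E_6]$. The upper-range simplification comes from applying the analogous bound to cusp forms of $\rho^\vee$: by the Serre duality identification above, $S_{2-k}(\rho^\vee) = 0$ as soon as $2-k$ falls below the cuspidal threshold $\frac{12}{d}\Tr(L_0^\vee) + 1 - d$. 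One checks directly that $\Tr(L) + \Tr(L_0^\vee) = d$ (at each standard eigenvalue $\lambda \in [0,1)$ of $\rho(T)$, the corresponding cuspidal eigenvalue of $\rho^\vee(T)$ is $1$ if $\lambda = 0$ and $1-\lambda$ otherwise), and substituting converts the vanishing into the stated threshold $k > \frac{12}{d}\Tr(L) + d - 11$. The cusp form statement is entirely parallel, using the companion identity $\Tr(L_0) + \Tr(L^\vee) = d$.

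The main difficulty is the Serre duality identification at the cusp: the interplay between the dualizing twist $\omega^{\otimes 2}(-\infty)$ and the change of exponent convention $[0,1) \leftrightarrow (0,1]$ must be shown to combine to exactly $\cSbar_{2-k}(\rho^\vee)$ (respectively $\cVbar_{2-k}(\rho^\vee)$), which requires a careful local analysis at $\infty$ of the bundle $\cVbar_{k,L}(\rho)$ and how the eigenvalues of $\rho(T)$ transport under dualization. Once that local identification is in hand, the arithmetic matching of the three thresholds is essentially bookkeeping.
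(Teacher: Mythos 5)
Your proposal is correct and follows essentially the same route as the paper: vanishing below the threshold via the minimal-weight bound $k_1 \geq \frac{12}{d}\Tr(L)+1-d$ of Proposition 3.14 of \cite{CandeloriFranc}, the identification $h^1(\cVbar_k(\rho)) = \dim S_{2-k}(\rho^\vee)$ via Serre duality, and the trace identity $\Tr(L_0^\vee) = d - \Tr(L)$ (equivalently $\Tr(L^\vee) = d - \Tr(L) - m$ plus the shift by the multiplicity $m$ of the eigenvalue $1$) to convert the cuspidal threshold for $\rho^\vee$ into the stated upper cutoff $k > \frac{12}{d}\Tr(L)+d-11$. The only difference is that you sketch a derivation of the duality $\cVbar_k(\rho)^\vee \cong \cSbar_{12-k}(\rho^\vee)$ from the dualizing sheaf of $\overline{\cM}_{1,1}$, whereas the paper simply cites this from \cite{CandeloriFranc}.
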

\begin{proof}
Recall (Proposition 3.14 of \cite{CandeloriFranc}) that if $\rho$ is irreducible of dimension $d$, $L$ is a choice of exponents for $\rho$, and $k$ is the minimal integer such that $h^0(\cVbar_{k,L}(\rho)) \neq 0$, then
\[
  k \geq \frac{12}{d}\Tr(L) + 1-d.
\]
Let $m$ be the multiplicity of one as an eigenvalue for $\rho(T)$, let $k$ be the minimal weight for $\rho$, and let $\ell$ be the minimal integer such that $S_\ell(\rho) \neq 0$. Then if $L$ is a standard choice of exponents for $\rho$,
\[
  \ell \geq \frac{12\Tr(L)+12m}{d} + 1-d.
\]
However, to apply Serre duality to the computation of dimensions of spaces of modular forms, one wishes to know when $S_{2-k}(\rho^\vee)$ is nonzero. Note that the multiplicity of one as an eigenvalue of $\rho^\vee(T)$ is also $m$. If $L$ is a standard choice of exponents for $\rho$, and if $L^\vee$ is a standard choice of exponents for $\rho^\vee$, then $\Tr(L^\vee) = d-\Tr(L)-m$. We thus see that if $S_{2-k}(\rho^\vee) \neq 0$ then
\[
  k \leq \frac{12}{d}\Tr(L)+d-11
\]
This proves the claim about $M_k(\rho)$. The proof of the claim for $S_k(\rho)$ is similar.
\end{proof}

The middle cases of Proposition \ref{p:dimension} comprise at most $2d-11$ weights. Half of these can be eliminated using parity considerations, but in general the other half might be difficult to compute. When $d \leq 5$, however, Proposition \ref{p:dimension} gives explicit formulae for $\dim M_k(\rho)$ and $\dim S_k(\rho)$ in all weights. For general $d$ one can use Proposition \ref{p:dimension} and positivity to narrow down the possibilities for $\dim M_k(\rho)$ and $\dim S_k(\rho)$ in low weights to a finite number of possibilities -- see Theorem \ref{t:algorithm} below.

The free module theorem for vector valued modular forms states that the module $M(\rho)$ of vector valued modular forms for $\rho$ is free of rank $d = \dim \rho$ over the ring $M$ of scalar modular forms of level one.\ This result follows, for example, from the complete decomposability of vector bundles on the moduli stack of elliptic curves \cite{CandeloriFranc}.\ The free-module theorem also holds for the module $S(\rho)$ of cusp forms for $\rho$, and more generally for the module $M_L(\rho)$ of modular forms for $\rho$ relative to any given choice of exponents $L$ for $\rho$. 

Let us write
\[
  \cVbar_k(\rho) = \bigoplus_{j = 1}^d \cO(k-k_i),\quad\quad \cSbar_k(\rho) = \bigoplus_{j = 1}^d \cO(k-\ell_i)
\]
for integers $k_i$, $\ell_i$ with $k_j \leq k_{j+1}$ and  $\ell_j \leq \ell_{j+1}$ for all $j$. The integers $-k_i$ are the \emph{roots} of $\rho$. The tuples $(k_1,\ldots, k_d)$ and $(\ell_1,\ldots, \ell_d)$ are called the \emph{weight profile} and \emph{cuspidal weight profile} of $\rho$, respectively. More generally, if $\cVbar_{k,L}(\rho) = \bigoplus_{j = 1}^d \cO(k-k_i)$ then we call $(k_1,\ldots, k_d)$ the \emph{$L$-adapted weight profile} of $\rho$. If $(k_j)$ denotes a weight profile, then the \emph{type profile}, or more simply, the \emph{type} of $\rho$ is the tuple $(0,k_2-k_1,\ldots, k_d-k_1)$. Obviously one can recover the weight profile from the knowledge of the type and the minimal weight. Conversely,
\begin{lem}
\label{l:weightprofilesum}
  Let $\rho$ denote a representation with weight profile $(k_1,\ldots, k_d)$ and standard choice of exponents $L$. Then $\sum_{j = 1}^d k_j = 12\Tr(L)$. In particular, the minimal weight of $\rho$ is determined by the type of $\rho$ and $\Tr(L)$ for a standard choice of exponents $L$ for $\rho$.
\end{lem}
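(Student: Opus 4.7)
The plan is to compute the determinant line bundle $\det \cVbar_{0,L}(\rho)$ on the moduli stack in two different ways and compare. On one hand, the decomposition $\cVbar_0(\rho) = \bigoplus_{j=1}^d \cO(-k_j)$ implied by the weight profile gives, by additivity of determinants over direct sums,
\[
  \det \cVbar_0(\rho) = \cO\bigl(-\textstyle\sum_{j=1}^d k_j\bigr).
\]
On the other hand, I would use that the construction of \cite{CandeloriFranc} is natural with respect to exterior powers, so $\det \cVbar_{0,L}(\rho) = \cVbar_{0,\Tr(L)}(\det\rho)$. That $\Tr(L)$ is a legitimate choice of exponent for the one-dimensional representation $\det\rho$ is exactly the identity $\det\rho(T) = \det e^{2\pi i L} = e^{2\pi i\Tr(L)}$ already recorded in the introduction.

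The next step is to reduce to the one-dimensional case. Every character of $\SL_2(\ZZ)$ factors through the abelianization $\ZZ/12\ZZ$, so $\det\rho = \chi^a$ for a unique $a \in \{0,1,\ldots,11\}$ with $a \equiv 12\Tr(L) \pmod{12}$. For the standard exponent $a/12$, the form $\eta^{2a}$ is a generator of $M(\chi^a)$, so the weight profile is the singleton $(a)$ and $\cVbar_0(\chi^a) = \cO(-a)$. Shifting the scalar exponent $e$ upward by $1$ corresponds to imposing an extra order of vanishing at the cusp, which tensors the line bundle with $\cO(-12)$; iterating yields $\cVbar_{0,e}(\chi^a) = \cO(-12e)$ for any admissible exponent $e$. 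Applied with $e = \Tr(L)$, this gives
\[
  \det\cVbar_{0,L}(\rho) = \cO(-12\Tr(L)).
\]

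Comparing the two expressions and using that $\cO(m) \cong \cO(m')$ forces $m = m'$, one concludes $\sum_j k_j = 12\Tr(L)$. The second assertion is then immediate: summing the entries of the type profile $(0, k_2-k_1,\ldots,k_d-k_1)$ gives $\sum_j k_j - d k_1$, so
\[
  k_1 = \frac{1}{d}\Bigl(12\Tr(L) - \sum_{j=2}^d (k_j - k_1)\Bigr),
\]
which recovers $k_1$ (and hence the whole weight profile) from the type and $\Tr(L)$.

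The principal obstacle is justifying the two categorical facts used about the construction $\cVbar_{0,L}$: naturality under $\det$, and the shift rule $\cVbar_{0,e+1} = \cVbar_{0,e}\otimes \cO(-12)$ for one-dimensional representations. Both are essentially formal consequences of the definition in \cite{CandeloriFranc}, but neither has been recorded explicitly in the excerpt and will have to be either cited or verified directly from the construction of the bundles via the $q$-expansion principle.
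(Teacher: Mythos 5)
Your proof is correct and follows essentially the same route as the paper: both arguments compute $\det\cVbar_k(\rho)$ once from the splitting $\bigoplus_j\cO(k-k_j)$ and once as $\cO(dk-12\Tr(L))$, and compare degrees in $\Pic(\cMbar_{1,1})\cong\ZZ$. The only difference is that the paper simply cites Proposition 3.6 of \cite{CandeloriFranc} for the determinant formula, whereas you re-derive it via naturality of the construction under exterior powers and reduction to the characters $\chi^a$ generated by $\eta^{2a}$ — a legitimate (and correctly executed) unpacking of that cited result.
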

\begin{proof}
Proposition 3.6 of \cite{CandeloriFranc} observes that $\det \cVbar_k(\rho) \cong \cO(dk-12\Tr(L))$, and the first claim follows from this. Thus $k_1 = (12\Tr(L) - \sum_{j = 1}^d(k_j-k_1))/d$, and this proves the second claim.
\end{proof}

\begin{rmk}
\label{r:duality}
Recall from Proposition 3.13 of \cite{CandeloriFranc} that $\cVbar_k(\rho)^\vee \cong \cSbar_{12-k}(\rho^\vee)$. It follows that if $(k_1,\ldots, k_d)$ is the weight profile of $\rho$, then $(12-k_d, \ldots, 12-k_1)$ is the cuspidal weight profile of $\rho^\vee$. Hence if $\rho$ is such that $1$ is not an eigenvalue of $\rho(T)$, so that $\cSbar_{12-k}(\rho^\vee)=\cVbar_{12-k}(\rho^\vee)$, the dual weight profile of $\rho$ is $(12-k_d,\ldots, 12-k_1)$. If moreover $\rho$ is self dual, this implies that $k_j +k_{d+1-j}=12$ for all $j$. Hence Lemma \ref{l:weightprofilesum} implies that $\Tr(L)=d/2$ for such representations $\rho$.  The problem of relating the weight profile of a representation with that of its dual is an interesting and likely tractable open problem.
\end{rmk}

\begin{lem}
\label{l:weightprofilebounds}
Let $\rho$ be an irreducible representation, and let $(k_1,\ldots, k_d)$ be its weight profile. Then
\[
  \frac{12}{d}\Tr(L) + 1-d \leq k_1 \leq k_d \leq \frac{12}{d}\Tr(L) + d-1.
\]
In particular, for all irreducible representations of dimension $d$, the weight profiles lie in the range $[1-d, d+10]$. There are thus finitely many weight profiles for irreducible representations in each dimension.
\end{lem}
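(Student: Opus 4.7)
The plan is to establish the two inequalities separately and then round to extract the uniform range. The lower bound is essentially a restatement of known material: by construction $k_1$ is the smallest integer with $M_{k_1}(\rho) \neq 0$, and the first case of Proposition \ref{p:dimension} (or equivalently the minimal-weight inequality of Proposition 3.14 of \cite{CandeloriFranc}) asserts that $\dim M_k(\rho) = 0$ whenever $k < \frac{12}{d}\Tr(L) + 1 - d$. Hence $k_1 \geq \frac{12}{d}\Tr(L) + 1 - d$.

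For the upper bound on $k_d$, I would appeal to Serre duality as packaged in Remark \ref{r:duality}. That remark identifies the cuspidal weight profile of $\rho^\vee$ as $(12-k_d, \ldots, 12-k_1)$, so the minimal cuspidal weight of $\rho^\vee$ equals $12 - k_d$. Applying the cusp-form case of Proposition \ref{p:dimension} to $\rho^\vee$ then gives
\[
12 - k_d \geq \frac{12}{d}\Tr(L_0^\vee) + 1 - d,
\]
where $L_0^\vee$ is a cuspidal choice of exponents for $\rho^\vee$. Using the identity $\Tr(L^\vee) = d - \Tr(L) - m$ stated in the introduction together with $\Tr(L_0^\vee) = \Tr(L^\vee) + m$, where $m$ is the common multiplicity of $1$ as an eigenvalue of $\rho(T)$ and $\rho^\vee(T)$, one computes $\Tr(L_0^\vee) = d - \Tr(L)$. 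Substituting and rearranging delivers the desired $k_d \leq \frac{12}{d}\Tr(L) + d - 1$.

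To extract the range $[1-d, d+10]$, I would invoke that $L$ is a standard choice of exponents, so each eigenvalue of $L$ has real part in $[0, 1)$, whence $\Tr(L) \in [0, d)$. Combined with $12\Tr(L) \in \ZZ$ (as follows from Lemma \ref{l:weightprofilesum}), this gives $12\Tr(L) \leq 12d - 1$, so $\frac{12}{d}\Tr(L) \leq 12 - \frac{1}{d}$. Integrality of $k_1$ and $k_d$ then produces $1 - d \leq k_1$ and $k_d \leq 11 + d - 1 = d + 10$, and finiteness follows because $(k_1, \ldots, k_d)$ is a weakly increasing $d$-tuple of integers lying in a fixed finite interval. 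The main obstacle is the upper bound on $k_d$: the key insight is recognizing that Remark \ref{r:duality} converts the question of maximum weight for $\rho$ into a minimum cuspidal weight question for $\rho^\vee$, at which point the dual of the lower-bound argument applies verbatim.
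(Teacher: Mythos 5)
Your proposal is correct and follows essentially the same route as the paper: the lower bound from the Wronskian/minimal-weight inequality of Proposition 3.14 of \cite{CandeloriFranc}, and the upper bound by applying Remark \ref{r:duality} to convert $k_d$ into the minimal cuspidal weight $12-k_d$ of $\rho^\vee$ and then invoking the same inequality with the cuspidal exponents, whose trace is $d-\Tr(L)$. Your explicit derivation of the range $[1-d,d+10]$ from $\Tr(L)\in[0,d)$ and integrality is a detail the paper leaves implicit, but it matches the intended argument.
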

\begin{proof}
The lower bound on $k_1$ is well-known (see e.g. Proposition 3.14 of \cite{CandeloriFranc}). By Remark \ref{r:duality}, the cuspidal weight profile of $\rho^\vee$ is $(12-k_d,\ldots, 12-k_1)$. Let $L^\vee$ be a choice of exponents for $\rho^\vee$ adapted to the interval $(0,1]$, so that $\cSbar_{k}(\rho^\vee) = \cVbar_{k,L^\vee}(\rho^\vee)$. Then by the slight generalization of the Wronskian argument given in Proposition 3.14 of \cite{CandeloriFranc},
\[
  \frac{12}{d}\Tr(L^\vee) +1-d \leq 12-k_d,
\]
and thus $k_d \leq 11+d - \frac{12}{d}\Tr(L^\vee)$. Note that $\Tr(L^\vee) = d-\Tr(L)$ since $L^\vee$ denotes the cuspidal exponents for $\rho^\vee$. Thus $k_d \leq \frac{12}{d}\Tr(L)+d-1$.
\end{proof}

\begin{rmk}
\label{r:bestpossiblebounds}
One can show that the bounds of Lemma \ref{l:weightprofilebounds} are sharp.
\end{rmk}

Tuba and Wenzl \cite{TubaWenzl} have described all irreducible representations of $\SL_2(\ZZ)$ in dimension less than six. One can use this and Proposition \ref{p:dimension} to compute all types and minimal weights in dimension less than six. The results are below. These computations are consistent with, and add precision to, the computations in \cite{Marks}.
\begin{ex}
\label{exrk4}
We list the possible types of irreducible representations in dimension $\leq 4$, along with the minimal weight. 
\begin{align*}
\begin{array}{c|l|l}
\textrm{Dimension} & \textrm{Type} & k_1\\
\hline
1 &(0)&12\Tr(L)\\
2 &(0,2)&6\Tr(L)-1\\
3 &(0,2,4)&4\Tr(L)-2\\
4 &(0,2,4,6)&3\Tr(L)-3\\
4 &(0,2,2,4)&3\Tr(L)-2
\end{array}
\end{align*}
\end{ex}

\begin{rmk}
In \cite{TubaWenzl} is it shown that up to a choice of square root of $\det(T)$, the eigenvalues of $\rho(T)$ determine four dimensional irreducible representations of $\SL_2(\ZZ)$. The two possibilities for the type in dimension $4$ correspond to the two choices of square root.
\end{rmk}

\begin{ex}
\label{exrk5}
The case of five dimensional irreducible representations is more interesting. In this case it need not be true that $5 \mid \Tr(L)$. Using \cite{TubaWenzl}, one can compute the minimal weights and types. To express the result it is best to write $\Tr(L) = \frac{a}{12}$ where $0 \leq a \leq 59$. One finds the following possibilities:
\[
\begin{array}{c|c|c}
a \pmod{5} &\textrm{Type} & k_1\\
\hline
0 &(0,2,4,6,8)&\left(a-20\right)/5\\
1 &(0,2,4,4,6)&\left(a-16\right)/5\\
2 &(0,2,2,4,4)&\left(a-12\right)/5\\
3 &(0,0,2,2,4)&\left(a-8\right)/5\\
4 &(0,2,2,4,6)&\left(a-14\right)/5
\end{array}
\]
\end{ex}

Our next goal is to describe an algorithm for enumerating a list that contains all possible types of irreducible representations in a given dimension.
\begin{thm}
\label{t:algorithm}
  There exists an algorithm that takes as input an integer $d \geq 1$ and the resulting output is a finite list of $d$-tuples of positive integers that contains all possible types of irreducible representations of $\SL_2(\ZZ)$ of dimension $d$.
\end{thm}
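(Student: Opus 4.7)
The plan is to reduce the problem to a finite enumeration and then refine it. By Lemma \ref{l:weightprofilebounds}, every irreducible representation of dimension $d$ has weight profile entries in the finite interval $[1-d, d+10]$. Consequently, the Laurent polynomial
$$
P_\rho(t) \;:=\; \sum_{i=1}^{d} t^{k_i}
$$
has support contained in $[1-d, d+10]$, non-negative integer coefficients, and total coefficient sum equal to $d$. There are only finitely many such Laurent polynomials, so the crudest algorithm enumerates them all and outputs the corresponding types $(0, k_2-k_1, \ldots, k_d-k_1)$; this already establishes the theorem.

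To obtain a considerably shorter list, I would refine by iterating over the finite set of candidate invariants $(\Tr(L), m)$, where $L$ is a standard choice of exponents for $\rho(T)$ and $m$ is the multiplicity of $1$ as an eigenvalue of $\rho(T)$. Since $\det\rho$ factors through $\SL_2(\ZZ)^{\mathrm{ab}} = \ZZ/12$, one has $12\Tr(L) \in \ZZ \cap [0, 12d)$, and $m \in \{0, 1, \ldots, d\}$, so the candidate pairs lie in a finite set. Westbury's Theorem \ref{t:westbury} may then be used to restrict the list further to those pairs that actually arise from irreducible representations. For each such candidate pair, the formulas of \cite{CandeloriFranc} make the Euler characteristic $\chi(\cVbar_k(\rho))$ an explicitly computable function of $k$.

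The refined algorithm then iterates over all surviving candidate pairs $(\Tr(L), m)$ and all candidate polynomials $P(t)$ of the form above, retaining those satisfying the compatibility constraints imposed by Lemma \ref{l:weightprofilesum} and Proposition \ref{p:dimension}: namely $\sum_{i} k_i = 12\Tr(L)$; the coefficient of $t^k$ in $P(t)/((1-t^4)(1-t^6))$ agrees with $\chi(\cVbar_k(\rho))$ whenever $k > \frac{12}{d}\Tr(L) + d - 11$ and vanishes whenever $k < \frac{12}{d}\Tr(L) + 1 - d$; and the same coefficient exceeds $\chi(\cVbar_k(\rho))$ by a non-negative integer, of the correct parity, in the intermediate range. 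Each surviving $P(t)$ yields a type, and the algorithm returns the union of these over all candidate pairs.

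The main obstacle is the intermediate range of weights in Proposition \ref{p:dimension}, where $\dim M_k(\rho)$ differs from $\chi(\cVbar_k(\rho))$ by the a priori unknown quantity $\dim S_{2-k}(\rho^\vee) \geq 0$; this prevents the Hilbert series from being pinned down exactly by the Euler characteristic data alone, and consequently the output may retain extraneous $P(t)$ that do not correspond to any irreducible $\rho$. Crucially, however, this does not affect correctness, since every actual type arises from some irreducible $\rho$ and trivially satisfies all the enumerated consistency conditions, nor finiteness, since each candidate set at every step is finite.
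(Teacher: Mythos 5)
Your proof is correct, but it takes a genuinely different route from the paper's. The paper proves Theorem \ref{t:algorithm} in one line by invoking the no-gap lemma (Lemma \ref{lemmanogap}, proved later via the $R$-module structure of $M(\rho)$): since consecutive entries of the type differ by $0$ or $2$, there are only $2^{d-1}$ candidate types to enumerate. You instead rely on Lemma \ref{l:weightprofilebounds}, observing that all weights lie in $[1-d,\,d+10]$, so the weight profile is one of finitely many nondecreasing integer $d$-tuples from a fixed finite interval --- roughly $\binom{3d+9}{d}$ candidates rather than $2^{d-1}$. Your argument has the advantage of using only results already established at that point in the paper (no forward reference), and the paper itself acknowledges in the introduction that the weight bounds alone yield a finite list; the paper's argument buys a much shorter output list. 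Your second and third paragraphs --- iterating over $(\Tr(L),m)$, invoking Theorem \ref{t:westbury}, and imposing the Euler-characteristic compatibility of Proposition \ref{p:dimension} with the nonnegative correction terms $\dim S_{2-k}(\rho^\vee)$ in the intermediate range --- essentially reproduce the paper's post-theorem refinement culminating in equation (\ref{eq:weightprofilepoly}) and Lemma \ref{l:finiteness}; they are not needed for the theorem itself but correctly describe how the list is shortened in practice, and your closing observation that extraneous candidates harm neither correctness nor finiteness is exactly the right point.
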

\begin{proof}
By the no-gap lemma (Lemma \ref{lemmanogap} below), one could simply enumerate all possible sequences of integers $(x_1,\ldots, x_d)$ where $x_1 = 0$ and such that $0 \leq x_{i+1}-x_i \leq 1$ for $i = 1,\ldots, d-1$. There are $2^{d-1}$ such sequences.
\end{proof}

A large number of the $2^{d-1}$ possible types given by the no-gap lemma do not occur in practice. A number of additional restrictions on the types, arising from the differential structure on $M(\rho)$, are described in Sections \ref{s:Rmodule} and \ref{s:multiplicitybounds} below. Proposition \ref{p:dimension} and Westbury's description \cite{Westbury} of the irreducible components of the character variety of semistable representations of $\SL_2(\ZZ)$ can also be used to cut down the possibilities dramatically. We describe this next.

\begin{thm}
\label{t:westbury}
The character variety $X_d$ classifying $d$-dimensional semistable representations of $\PSL_2(\ZZ)$ is an affine algebraic variety that decomposes into a disjoint union of irreducible components $X_d = \amalg_\alpha X_\alpha$ indexed by tuples of nonnegative integers $\alpha = (a,b;x,y,z)$ satisfying $a+b = x+y+z = d$. A given irreducible representation $\rho$ of $\PSL_2(\ZZ)$ of dimension $d$ lies on the component $X_\alpha$ indexed by $\alpha = (a,b;x,y,z)$ where $a$ and $b$ are the multiplicities of $1$ and $-1$, respectively, as eigenvalues of $\rho(S)$, and where $x$, $y$ and $z$ denote the multiplicities of $1$, $\zeta = e^{\frac{2\pi i}{3}}$ and $\zeta^2$, respectively, as eigenvalues of $\rho(R)$.
\end{thm}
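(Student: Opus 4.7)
The plan is to exploit the presentation $\PSL_2(\ZZ) \cong \ZZ/2 \ast \ZZ/3$ with generators $S$ and $R$, so that the representation variety $R_d = \Hom(\PSL_2(\ZZ),\GL_d(\CC))$ identifies with the affine subvariety of pairs $(A,B) \in \GL_d \times \GL_d$ satisfying $A^2 = I$ and $B^3 = I$. The character variety $X_d$ is then the GIT quotient $R_d /\!\!/ \GL_d$ under simultaneous conjugation, which is affine because $\GL_d$ is reductive and $R_d$ is affine. The key observation, which I would record first, is that both $A$ and $B$ are automatically semisimple---they satisfy polynomial equations with distinct roots over $\CC$---so their conjugacy classes are closed in $\GL_d$ and are classified by the multiplicities of their eigenvalues. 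This assigns to any such pair the discrete invariant $\alpha = (a,b;x,y,z)$ where $(a,b)$ records the multiplicities of $1,-1$ as eigenvalues of $A$ and $(x,y,z)$ the multiplicities of $1,\zeta,\zeta^2$ as eigenvalues of $B$, with $a+b = x+y+z = d$.

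Next I would establish the decomposition $R_d = \bigsqcup_\alpha R_\alpha$ into finitely many pieces $R_\alpha = \cO_\alpha^A \times \cO_\alpha^B$, where $\cO_\alpha^A$ and $\cO_\alpha^B$ are the two relevant conjugacy classes. Each such class is a homogeneous space $\GL_d / C$ for a connected Levi centralizer $C$, hence smooth and irreducible, so each $R_\alpha$ is irreducible. Since $\alpha$ is manifestly invariant under simultaneous conjugation, this decomposition descends to $X_d = \bigsqcup_\alpha X_\alpha$ with $X_\alpha = R_\alpha /\!\!/ \GL_d$, and irreducibility is preserved under GIT quotients by connected reductive groups. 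For any given semistable $\rho$, the invariant $\alpha$ can then be read off directly from the eigenvalue multiplicities of $\rho(S)$ and $\rho(R)$, placing $\rho$ on $X_\alpha$ by construction.

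The step I would expect to be the main obstacle is verifying that the $X_\alpha$ really are the irreducible components, rather than sitting inside a larger component that interpolates between them. This reduces to two standard facts that nevertheless have to be invoked carefully: first, that semisimple conjugacy classes in a reductive group are Zariski closed, which guarantees that the $R_\alpha$ are closed (not merely locally closed) strata and so no stratum can degenerate into another under closure operations implicit in GIT; and second, that the GIT quotient of an irreducible affine variety by a connected reductive group is again irreducible, so that each $X_\alpha$ is irreducible. Taken together these imply that the $X_\alpha$ are pairwise disjoint irreducible closed subvarieties whose union is $X_d$, and hence are precisely its irreducible components, completing the proof.
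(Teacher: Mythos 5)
Your argument is correct, but you should know that the paper does not actually prove this theorem: its ``proof'' is a citation of Westbury's unpublished 1995 preprint together with a pointer to Section 2 of Le Bruyn's paper. So there is no argument in the paper to compare yours against line by line; what you have written is the standard reconstruction of Westbury's result, and it holds up. All the essential points are present and valid: the presentation $\PSL_2(\ZZ)\cong \ZZ/2 \ast \ZZ/3$ identifies $\Hom(\PSL_2(\ZZ),\GL_d(\CC))$ with the affine variety of pairs $(A,B)$ satisfying $A^2=B^3=I$; both matrices are semisimple because $t^2-1$ and $t^3-1$ are separable, so each lies in one of finitely many closed conjugacy classes determined by eigenvalue multiplicities; each class is irreducible as the image of the irreducible group $\GL_d$ under the orbit map (connectedness of the centralizer is not even needed); the strata $R_\alpha$ are therefore closed, irreducible, pairwise disjoint, $\GL_d$-stable, and nonempty for every admissible $\alpha$ (take diagonal representatives), and they cover the representation variety; and a good quotient carries closed invariant sets to closed sets and irreducible sets to irreducible sets, while disjoint closed invariant sets are separated by invariant functions, so the images $X_\alpha$ are pairwise disjoint, closed, irreducible, and exhaust $X_d$, hence are its irreducible components. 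The one point worth making explicit is the meaning of ``classifying semistable representations'': the closed points of the affine GIT quotient correspond to closed orbits, i.e.\ to semisimple representations (S-equivalence classes), and an irreducible $\rho$ has closed orbit, so it genuinely defines a point of the stratum $X_\alpha$ indexed by the eigenvalue multiplicities of $\rho(S)$ and $\rho(R)$. With that sentence added, your proof is complete and self-contained --- which is more than the paper itself supplies.
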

\begin{proof}
This result was originally proved by Bruce Westbury \cite{Westbury}, but it remains unpublished. See Section 2 of \cite{LeBruyn} for more information on the character variety of the modular group.
\end{proof}
\begin{rmk}
\label{r:sl2}
If $\rho$ is an odd irreducible representation of $\SL_2(\ZZ)$, then $\rho \otimes \chi$ is an irreducible representation of $\PSL_2(\ZZ)$. Thus, Theorem \ref{t:westbury} allows one to give a similar description for the character variety of $\SL_2(\ZZ)$.
\end{rmk}

Fix an irreducible representation $\rho$ of dimension $d$, let $L$ denote a standard choice of exponents for $\rho(T)$, so that $\Tr(L) \in [0,d)$, and write $s = \Tr(\rho(S))$, $r_1 = \Tr(\rho(R))$ and $r_2 = \Tr(\rho(R^2))$. The quantities $s$, $r_1$ and $r_2$ are constant on each irreducible component of the character variety by Theorem \ref{t:westbury} and Remark \ref{r:sl2}. Similarly, $\det \rho$ is constant on components of the character variety, so that $\Tr(L)$ takes on at most $d$ values across each component of the character variety. Thus, if we perform the following computation for fixed $s$, $r_1$, $r_2$ and for the $d$ possible values of $\Tr(L)$ for representations on the component of the character variety that contains $\rho$, then the result is a finite computation that gives all possible types of representations on the irreducible component that contains $\rho$. Thus, we need only describe how to narrow down the possibilities for the type of our fixed $\rho$ to a finite list.

In order to describe the computation, let $\ell_1,\ldots \ell_r$ denote the increasing sequence of integers in the interval between $(12/d)\Tr(L) + 1-d$ and $(12/d)\Tr(L) +d-11$ with the same parity as $\rho$, and set $a_j = \dim S_{2-\ell_j}(\rho^\vee)$ for each $j$. Then by Proposition \ref{p:dimension},
\begin{equation}
\label{eq:poincareseries}
\frac{\sum_{j = 1}^d T^{k_j}}{(1-T^4)(1-T^6)} = \sum_{k \geq \ell_1}\chi(\cVbar_k(\rho))T^k + \sum_{j=1}^r a_jT^{\ell_j}.
\end{equation}
By Corollary 6.2 of \cite{CandeloriFranc}, 
\begin{align*}
\sum_{k \geq \ell_1} \chi(\cVbar_k(\rho))T^k=&T^{\ell_1}\left(\frac{5d-12\Tr(L)}{12}\frac{1}{1-T^2}+\frac{s}{4}\frac{i^{\ell_1}}{1+T^2}+\frac{r_1}{3(1-\zeta)}\frac{\xi^{\ell_1}}{1-\zeta T^2}+\right.\\
&\quad \left.\frac{r_2}{3(1-\zeta^2)}\frac{\zeta^{\ell_1}}{1-\zeta^2 T^2}+\frac{d}{12}\frac{\ell_1 -(\ell_1-2)T^{2}}{(1-T^2)^2}\right)
\end{align*}
where $\xi = e^{2\pi i/6}$ and $\zeta = \xi^2$. It follows that equation (\ref{eq:poincareseries}) yields an explicit and computable equation of the form 
\begin{equation}
\label{eq:weightprofilepoly}
\sum_{j = 1}^d T^{k_j} = T^{\ell_1}P(T) + \sum_{j = 1}^ra_jT^{\ell_j}(1-T^4)(1-T^6)
\end{equation}
where $P(T)$ is a polynomial of degree at most $8$ with integer coefficients. Note that $P(T)$ only depends on $d$, $s = \Tr(\rho(S))$, $r_1 = \Tr(\rho(R))$, $r_2 = \Tr(\rho(R^2))$ and $\Tr(L)$ (since $\ell_1$ was defined in terms of $\Tr(L)$ and $d$ via Proposition \ref{p:dimension}). 
\begin{lem}
\label{l:finiteness}
There are only finitely many solutions to equation (\ref{eq:weightprofilepoly}) in nonnegative integers $a_j$.
\end{lem}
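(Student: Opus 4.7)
The plan is to bound both the weights $k_j$ and the nonnegative integers $a_j$ to lie in finite sets. By Lemma \ref{l:weightprofilebounds}, each $k_j$ lies in $[1-d,d+10]$, so the left-hand side of (\ref{eq:weightprofilepoly}) -- a polynomial in which the coefficient of $T^n$ is the multiplicity of $n$ among the $k_j$ -- is supported in this interval and has nonnegative integer coefficients each at most $d$. It therefore suffices to bound each $a_j$ by a constant depending only on the fixed data $d, s, r_1, r_2, \Tr(L)$.

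To accomplish this I would exploit the shape of $(1-T^4)(1-T^6) = 1 - T^4 - T^6 + T^{10}$. Setting $f_j(T) := T^{\ell_j}(1-T^4)(1-T^6)$, the top monomial of $f_j$ is $+T^{\ell_j+10}$, and the integers $\ell_1 + 10 < \cdots < \ell_r + 10$ are distinct. Since $T^{\ell_1}P(T)$ has degree at most $\ell_1 + 8 \leq \ell_r + 8 < \ell_r + 10$, the coefficient of $T^{\ell_r + 10}$ on the right-hand side comes only from $a_r f_r$, and it equals $a_r$. Matching this with the corresponding left-hand side coefficient, which lies in $[0,d]$, gives $0 \leq a_r \leq d$.

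I would then proceed by downward induction on $j = r, r-1, \ldots, 1$. Granted that $a_{j+1},\ldots, a_r$ are each bounded by a constant, the coefficient of $T^{\ell_j+10}$ on the right-hand side equals
\[
a_j \;+\; \sum_{\substack{j' > j \\ \ell_{j'}-\ell_j \in \{4,6,10\}}} \epsilon_{j'}\, a_{j'} \;+\; c_j,
\]
where $\epsilon_{j'} = +1$ if $\ell_{j'}-\ell_j = 10$ and $\epsilon_{j'} = -1$ if $\ell_{j'}-\ell_j \in \{4,6\}$, and where $c_j$ is the coefficient of $T^{\ell_j+10}$ in $T^{\ell_1}P(T)$. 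All summands other than $a_j$ are bounded (by the inductive hypothesis and the fact that $P$ has coefficients depending only on the fixed data), while the left-hand side coefficient lies in $[0,d]$; hence $a_j$ is also bounded, completing the induction.

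The only difficulty is the bookkeeping involved in tracking how each $a_{j'}$ with $j' > j$ contributes to the coefficient of $T^{\ell_j+10}$ via the three possible values $\ell_{j'}-\ell_j \in \{4,6,10\}$. The essential structural feature that makes everything work is that the leading coefficient of $(1-T^4)(1-T^6)$ is $+1$, so the coefficient of $a_j$ in the displayed expression is always exactly $1$, and solving for $a_j$ is immediate. No further obstacle arises.
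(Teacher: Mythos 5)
Your proof is correct and is essentially the paper's argument run in reverse: the paper inducts upward on $j$ using the coefficient of $T^{\ell_j}$ (exploiting the constant term $+1$ of $(1-T^4)(1-T^6)$), whereas you induct downward using the coefficient of $T^{\ell_j+10}$ (exploiting the leading term $+1$); in both cases the essential points are that $a_j$ enters the relevant coefficient with coefficient exactly $1$ and that every coefficient of the left-hand side lies in $[0,d]$. Your version is somewhat more explicit about the signs $\epsilon_{j'}$ and the contributing indices, but the two arguments are the same in substance.
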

\begin{proof}
By comparison with the left side of the equation, the coefficients of the right hand side of equation (\ref{eq:weightprofilepoly}) must be nonnegative integers that are no larger than $d$. Write $P(T) = \sum_{j = 0}^{8}b_jT^j$. The coefficient of $T^{\ell_1}$ in (\ref{eq:weightprofilepoly}) is of the form $p_0 + a_1$. Thus $0 \leq a_1 \leq d-p_0$, so that there are finitely many possibilities for $a_1$. Similarly, the coefficient of $T^{\ell_j}$ is of the form $p_k + a_j + Q(a_1,\ldots, a_{j-1})$ for some polynomial $Q(a_1,\ldots, a_{j-1})$. By induction, this polynomial takes on finitely many values, and so if $M$ is the maximal such value, we find that $0 \leq a_j \leq d-p_k-M$, for some index $k$. This proves the lemma.
\end{proof}

By Lemma \ref{l:finiteness} it is thus possible to enumerate the finitely many solutions to equation (\ref{eq:weightprofilepoly}) in nonnegative integers, and thereby find all types of irreducible representations $\rho$ of dimension $d$ and with fixed values of $\Tr(\rho(S))$, $\Tr(\rho(R))$, $\Tr(\rho(R^2))$, and $\Tr(L)$. By Theorem \ref{t:westbury}, this allows one to describe a finite list of all types of irreducible representations in dimension $d$. We have implemented these computations using \emph{Sage}, and we were able to run the algorithm in dimensions up to and including twelve before the computations began to run into memory limitations. Some of these results are listed in Section \ref{s:examples}. The number of types that are output by this algorithm tends to be exponential in $d$, but it is a much smaller number than the $2^{d-1}$ given by the no-gap lemma. Nevertheless, there are many types that arise in this way that do not actually occur. Some of these possibilities can be eliminated using the results of Section \ref{s:Rmodule} below, but in general it seems to be an open problem to determine exactly what type profiles do occur in each dimension.

We end this section by explaining how to extend this finiteness result to all representations.
\begin{prop}\label{proptypeprofile}  Fix a positive integer $d$. There are only finitely many possible weight profiles for representations of $\SL_2(\ZZ)$ of dimension $d$.
\end{prop}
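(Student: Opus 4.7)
The plan is to argue by induction on $d$. The base case $d=1$ is immediate, since one-dimensional representations of $\SL_2(\ZZ)$ are the characters of $\SL_2(\ZZ)^{\mathrm{ab}}\cong\ZZ/12\ZZ$, giving only twelve such representations, each with a unique weight profile. For the inductive step, assume the result in every dimension strictly less than $d$, and let $\rho$ be $d$-dimensional. If $\rho$ is irreducible, Lemma \ref{l:weightprofilebounds} applies directly. Otherwise there is a short exact sequence $0 \to \rho_1 \to \rho \to \rho_2 \to 0$ with $1 \leq \dim \rho_i < d$, and by induction the weight profiles of $\rho_1$ and $\rho_2$ belong to a finite set.

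I would then choose a standard set of exponents $L$ for $\rho$ compatible with the filtration, inducing standard choices $L_i$ for $\rho_i$; this yields a corresponding short exact sequence of vector bundles
\[
0 \to \cVbar_0(\rho_1) \to \cVbar_0(\rho) \to \cVbar_0(\rho_2) \to 0
\]
on the moduli stack of elliptic curves. Twisting by $\cO(n)$ and taking the long exact sequence in cohomology relates $\dim M_n(\rho)$ to $\dim M_n(\rho_1)$, $\dim M_n(\rho_2)$ and $H^1(\cVbar_n(\rho_1))$. Since $\cVbar_0(\rho_1)$ is a direct sum of line bundles of degree in a range bounded by $d$, there exists $N_0 = N_0(d)$ with $H^1(\cVbar_n(\rho_1)) = 0$ for all $n \geq N_0$. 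Hence for such $n$ one has additivity $\dim M_n(\rho) = \dim M_n(\rho_1) + \dim M_n(\rho_2)$, while for $n < N_0$ the defect is an integer of bounded magnitude. Left-exactness of $H^0$ further yields the inequality $k_1(\rho) \geq \min(k_1(\rho_1), k_1(\rho_2))$ on minimal weights, confining the defect to a bounded range of weights.

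Using the free module structure, I would then write the Hilbert series as $\sum_n (\dim M_n(\rho))T^n = P_\rho(T)/((1-T^4)(1-T^6))$, where $P_\rho(T) := \sum_i T^{k_i}$ records the weight profile, and analogously for $\rho_1, \rho_2$. Clearing denominators gives
\[
P_\rho(T) = P_{\rho_1}(T) + P_{\rho_2}(T) + D(T)(1-T^4)(1-T^6),
\]
where by the previous paragraph $D(T)$ is an integer (Laurent) polynomial with coefficients of bounded absolute value and support in a bounded interval, all bounds depending only on $d$. Hence $D(T)$ lies in a finite set, and combined with the inductive hypothesis this forces $P_\rho(T)$ into a finite set; the weight profile $(k_1,\ldots,k_d)$ is recovered as the multiset of its exponents, completing the induction.

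The main obstacle I anticipate is to make the two uniform estimates precise, namely the existence of a threshold $N_0 = N_0(d)$ beyond which $H^1(\cVbar_n(\rho_1))$ vanishes for every extension configuration at the inductive stage, and the confinement of the support of $D(T)$ to a bounded interval below. Both should follow from the inductive hypothesis applied to $\rho_1$ and $\rho_2$, together with the vanishing of higher cohomology for line bundles of sufficiently positive degree on the moduli stack and the left-exactness of $H^0$; however, some care is needed to express the resulting constants in a form depending only on $d$ and to verify that the construction of $\cVbar_0(\rho)$ is compatible with subrepresentations, so that the short exact sequence of bundles really does arise from the given filtration of $\rho$.
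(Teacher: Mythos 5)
Your overall strategy --- induction on $d$, reducing a general $\rho$ to the irreducible case (where Lemma \ref{l:weightprofilebounds} gives the finiteness) via a short exact sequence $0\to\rho_1\to\rho\to\rho_2\to 0$ --- is the same as the paper's, and your argument is correct, but the mechanism you use to transfer finiteness across the extension is genuinely different and considerably heavier. The paper simply applies the functor $M$ to obtain the left-exact sequence $0\to M(\rho_1)\to M(\rho)\to M(\rho_2)$ of free graded $M$-modules and observes that this already forces the multiset of generating weights of $M(\rho)$ to be contained in the union of those for $\rho_1$ and $\rho_2$; taking a composition series then bounds the number of weight profiles in dimension $d$ directly by the irreducible case, with no cohomology and no Hilbert series. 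You instead pass to the short exact sequence of bundles, control the failure of additivity of $\dim M_n$ by $h^1(\cVbar_n(\rho_1))$, and package the discrepancy as a defect polynomial $D(T)$ of bounded support and bounded coefficients in the identity $P_\rho=P_{\rho_1}+P_{\rho_2}+D(T)(1-T^4)(1-T^6)$. The points you flag do check out: for a filtration-compatible choice of exponents with eigenvalues in $[0,1)$ the canonical-extension construction of $\cVbar_{0,L}$ is exact, so the bundle sequence exists; the inductive hypothesis bounds the line-bundle degrees of $\cVbar_n(\rho_1)$, and since $h^1(\cO(m))=h^0(\cO(10-m))$ vanishes for $m\geq 11$ this yields a threshold $N_0(d)$; and your inequality $k_1(\rho)\geq\min\bigl(k_1(\rho_1),k_1(\rho_2)\bigr)$ confines the support of $D$. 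What your route buys is a quantitative relation among the three weight profiles (the explicit defect $D(T)$), which is strictly more information than finiteness; what it costs is all of the cohomological bookkeeping, which the paper's one-line use of left-exactness and freeness renders unnecessary for the stated result.
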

\begin{proof} 
We have explained the proof of Proposition \ref{proptypeprofile} for irreducible representations. Suppose that 
\[0\rightarrow \rho_1\rightarrow\rho\rightarrow\rho_2\rightarrow 0\]
is a short exact sequence of representations of $\SL_2(\ZZ)$. After applying the functor $M$, there is an exact sequence
\[
0\rightarrow M(\rho_1)\rightarrow M(\rho)\rightarrow M(\rho/\rho_1)\]
(cf.\ \cite{MM} for more details).\
This shows that at the level of multisets, the
set of weights for $\rho$ is contained in the union of the corresponding multisets for
$\rho_1$ and $\rho_2$.\ Taking a composition series for $\rho$, this
shows that the number of weight profiles in dimension $d$ is no more than $d$ times the maximum of the number of weight profiles for an irreducible of dimension no greater than $d$.\ So finiteness 
in general follows from the irreducible case.
\end{proof}

\section{Exploiting the differential structure}
\label{s:Rmodule}
Let $R\df M\langle D\rangle$ be the  algebra of modular differential operators, where $D$ acts on modular forms of weight $k$ via the usual operator
\begin{eqnarray}
\label{Dkact}
  D_k \df q\frac{d}{dq} - \frac{k}{12}E_2.
\end{eqnarray}
Since we have normalized the Eisenstein series $E_4$ and $E_6$ of weights $4$ and $6$ to have constant term equal to $1$, one has $D(E_4) = -\frac{1}{3}E_6$ and $D(E_6) = -\frac{1}{2}E_4^2$.\ Formally, elements of $R$ are polynomials  $\sum_i f_iD^i$ in $D$ with coefficients $f_i\in M$, however $R$ is noncommutative (although associative).\ Multiplication is implemented using the identity $Df = fD+D(f)\ (f\in M)$. If we give $D$ degree $2$ then $R$ is an $\NN$-graded algebra.

One knows (\cite{Mason}, \cite{MM}) that $M(\rho)$ is a $\ZZ$-graded left $R$-module.\ Elements of $M$ act by multiplication
and $D$ acts via the obvious extension of (\ref{Dkact}) to vvmfs of weight $k$.\ It is the exploitation of this fact that
underlies the results in the present Section.\ Actually, the structure of $M(\rho)$ as $R$-module is an interesting topic in its own right,
but we will resist the temptation to axiomatize the situation, and simply record some of the relevant features.

The free module theorem (\cite{MM}, \cite{CandeloriFranc}) says that $M(\rho)$ is a free $M$-module of rank $\dim\rho$.\ On the other hand, $M(\rho)$ is a torsion $R$-module:\ every element in $M(\rho)$ has a nonzero annihilator in $R$.\ We will use the following more precise version of this fact in the case that $\rho$ is irreducible.

\begin{lem}
\label{lemmaDFs}
\label{lemmaRrk} 
Assume that $\rho$ is irreducible of dimension $d$, and let 
$F\in M_k(\rho)$ be nonzero. Then the following hold:
\begin{enumerate}
\item[(a)] $F, DF, \hdots, D^{d-1}F$ are linearly independent over $M$,
\item[(b)] $F, DF, \hdots, D^{d}F$ are linearly dependent over $M$ (that is, some polynomial of degree $d$ annihilates $F$, but none of degree less than $d$),
\item[(c)] If $0\not=N\subseteq M(\rho)$ is a graded $R$-submodule that is free of rank $r$ as an $M$-module, then $r = d$.
\end{enumerate}
\end{lem}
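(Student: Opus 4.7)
My plan is to prove (a) first via a Wronskian argument, and then derive (b) and (c) from it.

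For (a), form the $d \times d$ matrix $M_F$ with columns $F, DF, \ldots, D^{d-1}F$, and consider its determinant $W := \det M_F$. A direct calculation using $(D^j F)(\gamma\tau) = (c\tau+d)^{k+2j}\rho(\gamma)(D^j F)(\tau)$ shows that $W$ is a holomorphic scalar modular form of weight $d(k+d-1)$ for the character $\det\rho$, and (a) is equivalent to the assertion $W\neq 0$. I argue by contradiction: if $W = 0$ then there is a smallest $r$ with $1\leq r \leq d-1$ and a meromorphic relation $D^r F = \sum_{j=0}^{r-1} h_j D^j F$, and the $\SL_2(\ZZ)$-equivariance of this relation together with the meromorphic linear independence of $F, DF, \ldots, D^{r-1}F$ forces $h_j \in K := \Frac(M)$. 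The $K$-span $V$ of $F, DF, \ldots, D^{r-1}F$ is then a nonzero proper $D$-stable $K$-subspace of $M(\rho) \otimes_M K$ of dimension $r$. Via the monodromy correspondence described in \cite{CandeloriFranc}, such a subspace yields a subrepresentation $\rho' \subsetneq \rho$ of dimension $r$, contradicting the irreducibility of $\rho$.

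Part (b) is immediate from (a): the $d+1$ elements $F, DF, \ldots, D^d F$ lie in the rank-$d$ free $M$-module $M(\rho)$ and so are $M$-linearly dependent; by (a), the coefficient of $D^d F$ in any nontrivial relation is nonzero, which supplies a polynomial of degree exactly $d$ in $R$ annihilating $F$, while no such polynomial of lower degree exists (again by (a)). For (c), let $0\neq N\subseteq M(\rho)$ be a graded $R$-submodule free of rank $r$ over $M$, and pick any nonzero homogeneous $F\in N$. By (a), the elements $F, DF, \ldots, D^{d-1}F$ of $N$ are $M$-linearly independent, forcing $r\geq d$; together with $r\leq d$ from $N\subseteq M(\rho)$, this gives $r=d$.

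The main obstacle is the step in the proof of (a) that identifies the $D$-stable $K$-subspace $V$ with a subrepresentation of $\rho$. I would implement this via the equivalence between finite-rank differential $K$-modules (with regular singularities) and finite-dimensional representations of the orbifold fundamental group of $\cM_{1,1}$, which is $\SL_2(\ZZ)$, together with the identification in \cite{CandeloriFranc} of this monodromy representation with $\rho$.
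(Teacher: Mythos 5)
Your proposal is correct, and parts (b) and (c) run exactly as in the paper: (b) because $d+1$ homogeneous elements of the rank-$d$ free $M$-module $M(\rho)$ must be $M$-linearly dependent, with (a) guaranteeing that the coefficient of $D^dF$ in any such relation is nonzero; and (c) by applying (a) to a nonzero homogeneous element of $N$. Where you genuinely diverge is in (a). The paper also argues by contradiction from a relation of order $n\leq d-1$, but it stays entirely on the ``function'' side: such a relation is an order-$n$ MLDE, whose space of holomorphic solutions has dimension at most $n$, while the components of a nonzero $F\in M_k(\rho)$ span an $\SL_2(\ZZ)$-stable space of functions affording a representation equivalent to $\rho$, hence of dimension exactly $d$ by irreducibility --- a contradiction. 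You instead pass to the ``module'' side: the minimal-order relation with coefficients $h_j\in K=\Frac(M)$ produces a proper nonzero $D$-stable $K$-subspace of $M(\rho)\otimes_M K$, which you convert into a proper subrepresentation of $\rho$ via the Riemann--Hilbert/monodromy correspondence. Both routes work, but yours invokes heavier machinery: you must identify the monodromy of $M(\rho)\otimes_M K$ with $\rho$ (up to duality and a weight twist, since $D$ shifts weights by $2$), and check that the apparent singularities introduced by the poles of the $h_j$ do not change the monodromy group, whereas the paper needs only the elementary facts that an order-$n$ linear ODE has at most $n$ independent holomorphic solutions and that the component span of a nonzero vvmf for irreducible $\rho$ is $d$-dimensional. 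Note also that the Wronskian $W$ in your write-up ends up playing no role --- your argument runs entirely through the minimal relation --- so that framing can be dropped.
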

\begin{proof} To say that a nonzero polynomial of degree $n$ in $R$ annihilates $F$ just means (taking the grading into account) that  there is a  relation 
\begin{equation}
\label{eq:rel}
\sum_{i=0}^{n} f_iD^iF=0,
\end{equation}
where each $f_i \in M_{k'-k-2i}$ for some fixed $k'$ and $f_n\not=0$.

Relation (\ref{eq:rel}) tells us that $F$ satisfies a modular linear differential equation, or MLDE (cf.\ \cite{Mason}, \cite{FM1}), of order $n$.\ If there are \emph{no} such relations with $n=d$ then $F$, $DF$, $\hdots$, $D^dF$ are linearly independent over $M$, whence they span a free $M$-submodule of $M(\rho)$ of rank $d+1$.\ Since $M(\rho)$ is free of rank $d$ this is not possible, and this contradiction establishes part (b).

On the other hand, suppose (\ref{eq:rel}) holds with $n\leq d-1$.\ As an order $n$ MLDE, the solution space of (\ref{eq:rel}) is $n$-dimensional, and therefore the span $E$ of the components of $F$ (a subspace of the solution space) has dimension \emph{less} than $d$.\ However, because $\rho$ is irreducible, the components of $F$ span an $\SL_2(\ZZ)$-module that affords a representation equivalent to $\rho$.\ In particular, the span of these components has dimension $d$. This contradiction proves (a).

As for (c), choose a nonzero form $F \in N\cap M_k(\rho)$ for some $k$. By part (a), $F, \hdots, D^{d-1}F$ generate a free $M$-submodule of $N$ of rank $d$, so that $r\geq d$.\ On the other hand, $r\leq d$ because $M(\rho)$ is free of rank $d$.
Thus $r=d$, and the proof of the lemma is complete.
\end{proof}

Lemma \ref{lemmaRrk} implies the no-gap lemma (Lemma \ref{lemmanogap}), Lemma \ref{lemmacyclic} and Proposition \ref{p:cyclic} below.

\begin{dfn} 
Let $\rho$ denote an even or odd representation of $\SL_2(\ZZ)$, so that all weights in the weight profile of $\rho$ have the same parity. A \emph{gap} in the weight profile of $\rho$ is an integer $k$ with the following properties: $k$ has the same parity as the weights of $\rho$; there are weights of $\rho$ which are \emph{less}
than $k$ and weights  which are \emph{greater} than $k$, but \emph{no} weights
equal to $k$.
\end{dfn}
\begin{lem}[No-gap lemma]
\label{lemmanogap} 
Suppose that $\rho$ is an irreducible representation of $\SL_2(\ZZ)$. Then the weight profile of $\rho$ has no gaps.
\end{lem}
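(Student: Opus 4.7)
The plan is to argue by contradiction, using part (c) of Lemma \ref{lemmaRrk} applied to a natural truncation of $M(\rho)$ built from a hypothetical gap in the weight profile.

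Fix a free basis $F_1,\dots,F_d$ for $M(\rho)$ with $F_i \in M_{k_i}(\rho)$, so that $M(\rho) = \bigoplus_{i=1}^d M\cdot F_i$, and suppose for contradiction that $k$ is a gap in the weight profile. By definition of a gap, $k$ has the same parity as every $k_i$, and there is an index $j$ with $1 \le j < d$ and $k_j < k < k_{j+1}$ with no $k_i$ equal to $k$. The parity hypothesis then forces $k_j \le k-2$ and $k_{j+1} \ge k+2$. Let
\[
  N \df \bigoplus_{i=1}^j M\cdot F_i \subseteq M(\rho).
\]
This is a nonzero graded $M$-submodule, free of rank $j$, and by construction $N$ coincides with the $M$-span of all weight-$\le k$ elements of $M(\rho)$, since any element of $M(\rho)$ of weight $w \le k$ must lie in $\bigoplus_{k_i\le w} MF_i$, and no $k_i$ lies in the open interval $(k_j,k_{j+1})$.

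The crucial step is to verify that $N$ is stable under $D$, so that it becomes a graded $R$-submodule of $M(\rho)$. Using the Leibniz rule $D(gF_i) = D(g)F_i + gD(F_i)$ for $g \in M$, it suffices to check $D(F_i) \in N$ for each $i \le j$. But $D(F_i)$ has weight $k_i+2 \le k_j + 2 \le k$, so $D(F_i)$ is an element of $M(\rho)$ of weight at most $k$; by the observation above this element lies in $N$. Hence $DN \subseteq N$, as required. The parity of the gap is used here in precisely one place: it guarantees $k_j+2 \le k$, so that applying $D$ to the top generator of $N$ does not push it past the gap into weight $k_{j+1}$.

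Now $N$ is a nonzero graded $R$-submodule of $M(\rho)$ that is free of rank $j$ as an $M$-module. By Lemma \ref{lemmaRrk}(c) applied to the irreducible $\rho$, this forces $j = d$, contradicting $j < d$ coming from the existence of $k_{j+1}$. Hence no such gap $k$ can exist, and the weight profile of $\rho$ is gap-free. The only potential obstacle is the parity-matching in the step $k_j + 2 \le k$; but this is built into the definition of a gap (which requires $k$ to share the parity of the weights of $\rho$), so the argument goes through cleanly.
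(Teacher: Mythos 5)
Your argument is correct and is essentially the paper's own proof: in both cases one spans the generators of weight below the gap, observes that $D$ raises weight by only $2$ so the gap prevents $D$ from reaching the higher generators, and then invokes Lemma \ref{lemmaRrk}(c) to force the resulting graded $R$-submodule to have full rank $d$, contradicting the existence of generators above the gap. No substantive difference from the paper's proof.
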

\begin{proof} 
Suppose that $k$ is a gap in the weight profile of $\rho$.\ Then we can divide a set $X$ of (homogeneous) generators of $M(\rho)$ into two nonempty subsets $X=X_1\cup{X_2}$ such that all weights of generators in $X_1$ are $\leq k-2$, and all weights of generators in $X_2$ are $\geq k+2$.\ Note that $\abs{X_1}+\abs{X_2}=\dim\rho$.

Let $F\in X_1$.\ Then $\wt(D(F))=\wt(F)+2 \leq k$, so if we write $D(F)$ as an $M$-linear combination of generators in $X$, all of those generators have weight $\leq k$, and hence they lie in $X_1$.\ This shows that the $M$-submodule $M_1\subseteq M(\rho)$ spanned by the generators in $X_1$ is in fact an $R$-submodule.\ Since $X_2$ is nonempty, $M_1$ has $M$-rank $\abs{X_1}$. But $\abs{X_1} < \dim\rho$, and this contradicts Lemma \ref{lemmaRrk}(c).
\end{proof}

\begin{dfn} 
\label{d:multiplicities}
Fix a representation $\rho$ of $\SL_2(\ZZ)$ of dimension $d$. We denote by $\pi_{\rho}$ the ordered partition consisting of the \emph{multiplicities} of the weights that occur in the weight profile of $\rho$. Thus $\pi_{\rho}=(m_1, \ldots, m_r)$ means that the distinct weights that occur are $k_1'< \cdots <k_r'$ and the weight profile is 
\[\pi_\rho = (\underbrace{k_1', \ldots, k_1'}_{m_1}, \underbrace{k_2', \ldots , k_2'}_{m_2},\ldots).\] 
Similarly, we have the \emph{cuspidal} analog $\pi_{\rho}^S$ which records the multiplicities of the generating weights in the cuspidal weight profile of $\rho$.
\end{dfn}

\begin{rmk}
\label{r:multiplicities} If $\rho$ is irreducible with weight profile $(k_1,\ldots, k_d)$ and weight multiplicities $(m_1,\ldots, m_r)$, then the following identities hold:
\begin{enumerate}
\item if $j \geq 1$ and $1 \leq i \leq m_{j}$ then $k_{m_1+\cdots + m_{j-1}+i} = k_1 + 2j-2$,
\item $d = \sum_{i =1}^r m_i$,
\item $r = 1 + \frac{k_d-k_1}{2}$.
\end{enumerate}
\end{rmk}

\begin{lem}\label{lemmaunitarywts} If $\rho$ is an irreducible \emph{unitary} representation of $SL_2(\ZZ)$ distinct from the 1-dimensional trivial representation, then the weights $k_1, \hdots, k_d$ in the weight profile of $\rho$ lie in the range $[1, 11]$.
\end{lem}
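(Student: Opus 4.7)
My plan is to prove the two inequalities separately. The lower bound $k_1\geq 1$ will follow from showing that $M_k(\rho)=0$ for every $k\leq 0$, and the upper bound $k_d\leq 11$ will follow by applying the same vanishing to $\rho^\vee$ (which is again irreducible, unitary, and non-trivial, being isomorphic to $\overline{\rho}$) together with Remark \ref{r:duality}.

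For the vanishing step, I fix a $\rho$-invariant Hermitian inner product $\langle\cdot,\cdot\rangle$ on $\CC^d$ and, given $F\in M_k(\rho)$, study the real-valued function
\[
  \Phi(\tau) \df y^k\|F(\tau)\|^2, \qquad y=\operatorname{Im}\tau.
\]
Unitarity combined with the transformation rule $F(\gamma\tau)=(c\tau+d)^k\rho(\gamma)F(\tau)$ makes $\Phi$ an $\SL_2(\ZZ)$-invariant function on $\uhp$. Since $F$ is bounded at the cusp, $\Phi$ extends continuously to the compactified modular curve $X(1)$, with value $0$ at the cusp when $k<0$. A direct computation using holomorphicity of $F$ and completing a square in $yF'-i(k/2)F$ yields the identity
\[
  \Delta_{\mathrm{Eucl}}\Phi \;=\; 4y^{k-2}\bigl\|\,yF' - i(k/2)F\,\bigr\|^2 \;-\; k\, y^{-2}\Phi,
\]
whose right-hand side is manifestly non-negative when $k\leq 0$. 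Hence $\Phi$ is subharmonic on $\uhp$.

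The maximum principle applied on the compact Riemann surface $X(1)$ then forces $\Phi$ to be constant. For $k<0$ this constant must equal the cusp value $0$, so $F\equiv 0$. For $k=0$ constancy of $\Phi$ forces equality $\Delta_{\mathrm{Eucl}}\Phi\equiv 0$, which collapses to $F'\equiv 0$; the resulting constant vector lies in $\rho^{\SL_2(\ZZ)}$, which vanishes because $\rho$ is irreducible and non-trivial. This establishes $k_1\geq 1$. Applying the same argument to $\rho^\vee$ gives $S_k(\rho^\vee)\subseteq M_k(\rho^\vee)=0$ for $k\leq 0$, so the smallest generating weight of the free $M$-module $S(\rho^\vee)$ is at least $1$; by Remark \ref{r:duality} this smallest weight equals $12-k_d$, yielding $k_d\leq 11$. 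The main technical obstacle I expect is the subharmonicity identity itself, together with care in justifying the maximum principle through the cusp and the orbifold points of $X(1)$ (the latter being handled by removability, since $\Phi$ is bounded and $\SL_2(\ZZ)$-invariant).
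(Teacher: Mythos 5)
Your argument is correct, but it reaches the crucial vanishing statement by a different route than the paper. The paper gives two proofs: a citation to Section 6 of \cite{CandeloriFranc}, and a second proof that imports the Knopp--Mason extension of the classical Hecke estimate to the Fourier coefficients of the components of a holomorphic vvmf attached to a unitary $\rho$ (\cite{KM1}, \cite{KM2}); a ``standard argument'' from that polynomial growth then shows any nonzero holomorphic form has positive weight, and the bound $k_d\leq 11$ is deduced exactly as you do, by passing to $\rho^{\vee}$ (again unitary) and reading off $12-k_d\geq 1$ from Remark \ref{r:duality}. You replace the Fourier-coefficient input with a maximum-principle argument: unitarity makes the Petersson-norm function $\Phi=y^k\|F\|^2$ invariant, your subharmonicity identity (which checks out, since the cross terms in $\|yF'-i(k/2)F\|^2$ reproduce the mixed term of $\partial_\tau\partial_{\bar\tau}\Phi$ and the discrepancy between $k^2/4$ and $k(k-1)/4$ accounts for the $-k y^{-2}\Phi$ term) forces $\Phi$ to be constant on $X(1)$ for $k\leq 0$, whence $F=0$ for $k<0$ and, for $k=0$, $F$ is a constant $\rho$-invariant vector, hence zero by irreducibility and nontriviality. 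What this buys is self-containedness --- no appeal to the growth estimates of \cite{KM1}, \cite{KM2} --- at the cost of the analytic care you correctly flag: one must extend subharmonicity of the bounded invariant function across the cusp and the elliptic points before invoking the maximum principle on the compact quotient. Both proofs exploit unitarity in the same essential way, through the existence of an invariant Hermitian norm, and your treatment of the upper bound via duality coincides with the paper's.
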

\begin{proof} This is proved in Section 6 of \cite{CandeloriFranc}. We give a second proof: it is proved in Section 3 of \cite{KM1} (with further details in  Section 7 of \cite{KM2}) that  the classical Hecke estimate
$O(n^k)$ continues to hold for the $n^{th}$ Fourier coefficient of any component of a holomorphic vvmf of weight $k$ associated to a unitary representation $\rho$.\ Then a standard argument shows that if $\rho$ is irreducible and nontrivial, the weight of
a nonzero holomorphic vvmf is necessarily \emph{positive}.\ Hence, $k_1\geq 1$.

On the other hand, because $\rho$ is unitary then so is $\rho^{\vee}$.\ By Remark \ref{r:duality}, the lowest weight
in the cuspidal weight profile for $\rho^{\vee}$ is $12-k_d$, and by the argument of the previous paragraph we
have $12-k_d\geq 1$.
\end{proof}

\begin{dfn} Let $\rho$ be a representation of $SL_2(\ZZ)$.\ We say that $M(\rho)$ is \emph{cyclic} if it is a cyclic $R$-module, i.e.\ there is some weight $k$ vvmf $F$ such that $M(\rho)=R.F$.\ In this situation, we also say that $\rho$ itself is cyclic.
\end{dfn}
\begin{lem}\label{lemmacyclic} 
Suppose that $\rho$ is irreducible and that $\pi_{\rho}=(\overbrace{1, \hdots, 1}^t, \dots)$, i.e.\ there is $t\geq 1$ such that
the first $t$ weight multiplicities are $1$. Let $F$ be a nonzero vvmf of minimal weight $k_1$.\ Then either $\{F, DF, \hdots, D^{t-1}F\}$ is a complete set of  generators, or there is a generating set that contains $\{F, DF, ..., D^{t}F\}$.
\end{lem}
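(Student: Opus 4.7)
The plan is to argue by induction that each of $F, DF, \ldots, D^{t-1}F$ (and, in the second alternative, also $D^t F$) can be chosen as part of a minimal $M$-generating set for $M(\rho)$. The key algebraic setup is the graded Nakayama lemma: since $M(\rho)$ is free over the graded ring $M$ with unique graded maximal ideal $M_+=\bigoplus_{k>0} M_k$, a homogeneous subset is a minimal generating set exactly when its image in the graded $\CC$-vector space $V \df M(\rho)/M_+M(\rho)$ is a homogeneous basis. By Definition \ref{d:multiplicities}, the graded piece $V_{k_j'}$ has dimension $m_j$, and our hypothesis on $\pi_\rho$ gives $\dim V_{k_j'}=1$ for $1\leq j\leq t$.

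I would then show by induction on $j\in\{0,1,\ldots,t-1\}$ that the class of $D^jF$ is nonzero in $V_{k_{j+1}'}$. For the base case $j=0$, there is nothing of weight below $k_1$, so $(M_+M(\rho))_{k_1}=0$ and $[F]=F\neq 0$. For the inductive step, the induction hypothesis together with the one-dimensionality of $V_{k_1'},\ldots,V_{k_j'}$ lets us choose a minimal generating set for $M(\rho)$ in which the generators of weights $k_1',\ldots,k_j'$ are (scalar multiples of) $F, DF,\ldots, D^{j-1}F$. If $[D^jF]=0$ in $V_{k_{j+1}'}$, expanding along this generating set and observing that $k_{j+1}'-k_i'=2(j-i+1)>0$ yields an identity
\[
D^jF=\sum_{i=1}^{j} h_i\,D^{i-1}F,\qquad h_i\in M_+,
\]
i.e.\ a nontrivial $M$-linear relation among $F,DF,\ldots,D^jF$ of degree $j+1\leq t\leq d$. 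This contradicts Lemma \ref{lemmaRrk}(a), completing the induction.

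The two alternatives of the conclusion then split according to whether $t=d$ or $t<d$. If $t=d$, then $V=\bigoplus_{j=1}^{d} V_{k_j'}$ has total dimension $d$ with each graded piece one-dimensional and spanned by $[D^{j-1}F]$; hence $\{F,DF,\ldots,D^{d-1}F\}$ is itself a minimal generating set. If instead $t<d$, the same argument applies once more at $j=t$ (valid since $t+1\leq d$) to give $[D^tF]\neq 0$ in $V_{k_{t+1}'}$. Extending this nonzero class to a homogeneous basis of $V_{k_{t+1}'}$ (which has dimension $m_{t+1}$), adjoining bases of $V_{k_{t+2}'},V_{k_{t+3}'},\ldots$, and lifting everything to $M(\rho)$ produces a minimal generating set containing $\{F,DF,\ldots,D^tF\}$.

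The main subtlety I expect is entirely inside the inductive step: one must recognise that the one-dimensionality of $V_{k_i'}$ for $i\leq t$ is precisely what allows the previously produced $D^{i-1}F$ to be promoted \emph{simultaneously} to the chosen generator in weight $k_i'$ for all $i\leq j$, so that the putative vanishing of $[D^jF]$ really can be translated into an honest $M$-linear relation on $F,DF,\ldots,D^jF$ to which Lemma \ref{lemmaRrk}(a) applies. Everything else is graded linear algebra.
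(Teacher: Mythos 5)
Your proof is correct and is essentially the paper's argument: both proceed by induction, promoting $D^jF$ into a free generating set at each stage and, when this fails, extracting an $M$-linear relation $D^jF=\sum_i h_i D^{i-1}F$ with positive-weight coefficients that is then killed by Lemma \ref{lemmaRrk}. The only cosmetic differences are that you package ``can be included in a free generating set'' via graded Nakayama in $M(\rho)/M_+M(\rho)$ and invoke part (a) of Lemma \ref{lemmaRrk} directly, whereas the paper argues with generating sets and closes the stuck case via part (c).
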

\begin{proof} By assumption there is a unique generator of weight $k_1$ (up to scalars), so we can always include $F$ in a set of free generators. Suppose that $F, DF, \ldots,D^iF$ are in a free generating set, and that $i\leq t-1$.\ By hypothesis, the \emph{only} free generators with weight between $k_1$ and $k_1+2i-2$ are the $D^iF$ ($0\leq i<t$) (up to scalars). If we \emph{cannot} include $F, \hdots, D^iF, D^{i+1}F$ in a generating set, there must be an expression of the form $D^{i+1}F=\sum_j f_jG_j$ where $f_j\in M_j$ is a classical modular form of \emph{positive weight} and each $G_j$ is a free generator. Then $\wt(G_j)\leq k_1+2i-2$, whence each $G_j$ is equal to some $D^jF\ (j<i)$ (up to scalars). Now it follows that the $M$-span of $F, \hdots, D^iF$ is an $R$-module, and  by Lemma \ref{lemmaRrk} it follows that $i+1=d$. Thus $t\geq i+1 =d\geq t$, whence $d=t=i+1$. 
 
This shows that if $i<t-1$ then we can \emph{always} adjoin $D^{i+1}F$ to a set of  free generators $\{F, \hdots, D^iF\}$ to obtain a larger such set of free generators.\ Similarly, if $i=t-1$ then either we can similarly adjoin $D^tF$, or else $t=d$ and $\{F, \hdots, D^{t-1}F\}$ is already a complete set of free generators. The Lemma follows.
 \end{proof}

\begin{lem}\label{lemmacyclicequivs} Let $\rho$ be an irreducible representation of $SL_2(\ZZ)$ of dimension $d$.\ The following are equivalent:
\begin{enumerate}
\item[(a)] $M(\rho)$ is cyclic,
\item[(b)] There is $F\in M_{k_1}(\rho)$ such that $\{F, DF, \hdots D^{d-1}F\}$ freely generates of $M(\rho)$,
\item[(c)] $\pi_{\rho} = (1, 1,\hdots, 1)$.
\end{enumerate}
\end{lem}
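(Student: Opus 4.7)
The plan is to prove the cycle of implications (b)$\Rightarrow$(a)$\Rightarrow$(c)$\Rightarrow$(b), leveraging Lemmas \ref{lemmaRrk} and \ref{lemmacyclic}. The implication (b)$\Rightarrow$(a) is immediate: any free $M$-basis is \emph{a fortiori} an $R$-generating set, so $M(\rho) = R \cdot F$ is cyclic.

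For (a)$\Rightarrow$(c), I would first observe that a cyclic generator must lie in weight $k_1$: every element of $R \cdot F = \sum_{i\geq 0} M \cdot D^iF$ has weight at least $\wt(F)$ (since $D$ raises weight by $2$ and nonzero homogeneous elements of $M$ have non-negative weight), whence $\wt(F) = k_1$. Next, by Lemma \ref{lemmaRrk}(b) one has a relation $D^dF = \sum_{i=0}^{d-1} g_iD^iF$ with $g_i \in M$, and applying the Leibniz identity $D(fG) = D(f)G + fDG$ inductively shows that every $D^nF$ with $n \geq d$ lies in the $M$-span of $\{F, DF, \ldots, D^{d-1}F\}$. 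Hence $R \cdot F$ equals this $M$-span, and by Lemma \ref{lemmaRrk}(a) the $d$ elements $F, DF, \ldots, D^{d-1}F$ are $M$-linearly independent and therefore form a free $M$-basis. Reading off their pairwise distinct weights $k_1, k_1+2, \ldots, k_1+2(d-1)$ yields $\pi_\rho = (1, 1, \ldots, 1)$. As a bonus, this argument also establishes (a)$\Rightarrow$(b) en route.

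For (c)$\Rightarrow$(b) I would invoke Lemma \ref{lemmacyclic} with the maximal value $t = d$, permissible because all $d$ weight multiplicities equal $1$. Choosing a nonzero $F \in M_{k_1}(\rho)$, the lemma yields one of two alternatives: either $\{F, DF, \ldots, D^{d-1}F\}$ is already a complete free generating set, which is precisely (b), or else some free generating set contains $\{F, DF, \ldots, D^dF\}$. The latter would force $M(\rho)$ to have $M$-rank at least $d+1$, contradicting the free-module theorem. Hence (b) holds.

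There is no serious obstacle here; the proof is a mechanical synthesis of Lemmas \ref{lemmaRrk} and \ref{lemmacyclic}. The one subtlety to be careful about is the weight-bookkeeping in (a)$\Rightarrow$(c) that pins $\wt(F) = k_1$, which is necessary to match the precise statement of (b).
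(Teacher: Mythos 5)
Your implications (b)$\Rightarrow$(a) and (c)$\Rightarrow$(b) are fine, and the latter is exactly the paper's argument (Lemma \ref{lemmacyclic} with $t=d$, the second alternative being impossible since a free generating set has only $d$ elements). The observation that a cyclic generator must sit in the minimal weight $k_1$ is also correct and worth making explicit.

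The gap is in (a)$\Rightarrow$(c), at the step ``by Lemma \ref{lemmaRrk}(b) one has a relation $D^dF=\sum_{i=0}^{d-1}g_iD^iF$ with $g_i\in M$.'' Lemma \ref{lemmaRrk}(b) only asserts a dependence $\sum_{i=0}^{d}f_iD^iF=0$ with $f_d\neq 0$ in $M$; it does not say the relation can be taken monic. Since $M=\CC[E_4,E_6]$ has no nonconstant units, you can solve for $D^dF$ over $M$ only if $f_d$ is a nonzero constant, and that fails in general: up to a power of $\eta$, $f_d$ is the modular Wronskian of $F,\hdots,D^{d-1}F$, a form of weight $\sum_{i=0}^{d-1}(k_1+2i)-12\Tr(L)$, which is positive precisely when $\rho$ is \emph{not} cyclic (e.g.\ for the four-dimensional type $(0,2,2,4)$ it is proportional to $E_4$). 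So the monic relation you invoke is essentially equivalent to the statement being proved, and nothing in your argument between fixing $\wt(F)=k_1$ and this step actually uses hypothesis (a). The repair is to use cyclicity here: since $M(\rho)=R\cdot F=\sum_{i\geq 0}MD^iF$ is generated over $M$ by homogeneous elements of pairwise distinct weights $k_1+2i$, a free homogeneous generating set can be extracted from among the $D^iF$ (graded Nakayama over $M$), so every weight multiplicity is at most one and $\pi_\rho=(1,\hdots,1)$; this gives (c) directly, and (b) then follows from your own (c)$\Rightarrow$(b) step. This is in substance the paper's route, which uses Lemma \ref{lemmacyclic} to conclude that $\sum_{i\geq 0}MD^iF=\sum_{i=0}^{d-1}MD^iF$.
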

\begin{proof} If (a) holds, there is a vvmf $F$ of weight $k$ such that $M(\rho)=R.F$.\ Then  $M(\rho)=\sum_{i\geq 0} MD^iF \supseteq \sum_{i=0}^{d-1} MD^iF$, and by Lemma \ref{lemmacyclic} the last containment is an equality.\ Now (b) is a consequence of Lemma \ref{lemmaDFs}(a), and this shows that (a)$\Leftrightarrow$(b).\ Clearly
(b)$\Rightarrow$(c), while the converse also follows from Lemma \ref{lemmacyclic}.\ So (b)$\Leftrightarrow$(c),
and the proof of the Lemma is complete. 
\end{proof}

\begin{rmk} For further discussion of the case of cyclic $\rho$, see Theorem 1.3 and Section 4 of \cite{MM}.
\end{rmk}

\begin{ex} (a) For all $n\geq 0$, the $n^{th}$ symmetric power $S^n(\rho)$ of the defining 2-dimensional representation
$\rho$ of $SL_2(\ZZ)$ is irreducible and cyclic.\ These examples are discussed at length in \cite{KM3}.\\
(b) \emph{Every} irreducible $\rho$ of dimension $\leq 3$ is cyclic.\ See \cite{FM1} for an extensive discussion of these cases.
\end{ex}

In spite of these examples, it appears that there are not too many classes of irreducible $\rho$ which are cyclic, and it is an interesting  problem to try and \emph{classify} all examples.\ The unitary case seems particularly tractable, because of the next result.

\begin{lem}\label{p:cyclic}
Let $\rho$ be an irreducible, cyclic, unitary representation of $\SL_2(\ZZ)$.\ Then $\dim\rho\leq 6$.
\end{lem}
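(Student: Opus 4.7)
The plan is to combine the characterization of cyclic irreducible representations from Lemma \ref{lemmacyclicequivs} with the weight bound for unitary representations in Lemma \ref{lemmaunitarywts}. The dimension bound will then pop out of simple arithmetic, since cyclicity forces the weight profile to be a long arithmetic progression while unitarity squeezes it into a short interval.

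First, I would invoke Lemma \ref{lemmacyclicequivs}(c) to conclude that cyclicity of $\rho$ is equivalent to the statement $\pi_\rho = (1,1,\ldots,1)$; that is, every weight in the weight profile of $\rho$ occurs with multiplicity one. Combining this with the no-gap Lemma (Lemma \ref{lemmanogap}), the distinct weights form an arithmetic progression with common difference $2$, so the weight profile is
\[
  (k_1, k_1+2, k_1+4, \ldots, k_1 + 2(d-1)),
\]
where $d = \dim \rho$.

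Next, assuming for the moment that $\rho$ is not the $1$-dimensional trivial representation, I would apply Lemma \ref{lemmaunitarywts} to conclude that all weights of $\rho$ lie in the interval $[1,11]$. In particular $k_1 \geq 1$ and $k_1 + 2(d-1) \leq 11$, which immediately gives $2(d-1) \leq 10$, hence $d \leq 6$. The trivial representation is accounted for trivially since it has dimension $1 \leq 6$.

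There is no real obstacle here; the content of the lemma is entirely absorbed into the two cited results, and the argument is essentially a combinatorial squeeze. The only point that warrants a sentence of comment is the appeal to the no-gap lemma to pass from ``all multiplicities equal one'' to ``weights form a length-$d$ arithmetic progression of common difference $2$,'' which is where irreducibility (already in the hypothesis) gets used a second time.
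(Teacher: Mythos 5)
Your proof is correct and follows essentially the same route as the paper: cyclicity forces all weight multiplicities to equal one (Lemma \ref{lemmacyclicequivs}), while unitarity confines the weights to $[1,11]$ (Lemma \ref{lemmaunitarywts}), and since the weights all share one parity there can be at most six of them. Your extra appeal to the no-gap lemma is harmless but not needed, since distinctness plus the parity constraint already caps the count at six.
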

\begin{proof} 
We know from Lemma \ref{lemmacyclicequivs} that all weight multiplicities are equal to $1$ because $\rho$ is cyclic.\ On the other hand,
by Lemma \ref{lemmaunitarywts} there are no more than $5$ distinct weights thanks to unitarity.\ The only way to reconcile these statements is if the dimension $\dim\rho\leq 6$.  
\end{proof}

This section concludes with a result (Theorem \ref{l:goodbasis}) that will be used to prove upper bounds on weight multiplicities for irreducible representations (Theorem \ref{t:multiplicitybounds}). We begin with some preparations.

Let $F_1,\ldots, F_d$ denote a free basis for $M(\rho)$, chosen so that each $F_j$ has integer weight. Let $A = (a_{ij})$ denote the matrix of $D$ in this basis, so that $DF_j = \sum_{i = 1}^da_{ij}F_i$. If $F$ denotes the $d\times d$ matrix whose columns are the $F_j$, then $A$ is defined by the matrix equation $DF = FA$. If $F$ is replaced by $FP$ for some invertible matrix $P$ with entries in $M = \CC[E_4,E_6]$, and  if $A'$ is the matrix of $D$ with respect to this new basis, then
\[
  FPA' = D(FP) = D(F)P + FD(P) = FAP + FD(P)
\]
and thus $A' = P^{-1}AP + P^{-1}D(P)$. 

Suppose that $P$ corresponds to replacing a basis vector $F_j$ by $F_j-gF_i$ where $i < j$ and $g \in M$. We call this an \emph{elementary replacement operation}. The matrix of $D$ changes under such an elementary replacement operation as follows:
\begin{enumerate}
\item add $g$ times the $j$th row of $A$ to the $i$th row of $A$ and
\item subtract $g$ times the $i$th column of $A$ from the $j$th column of $A$ and
\item subtract $D(g)$ from the $(i,j)$-entry of the result.
\end{enumerate}
We will use elementary replacement operations to find a basis in which the matrix of $D$ has a particularly simple form. The idea will be to methodically winnow away copies of $E_6$. To this end, if $f \in M$, then let $d(f)$ denote the $E_6$-degree of $f$ when it is regarded as an element of the polynomial ring $\CC[E_4,E_6]$. For each integer $t \geq 0$, let
\begin{eqnarray*}
  M_k^t = \{f \in M_k \mid d(f) \leq t\}.
\end{eqnarray*}

\begin{thm}
\label{l:goodbasis}
Let $\rho$ denote an irreducible representation of $\SL_2(\ZZ)$ and let $L$ denote a choice of exponents for $\rho(T)$.\ Then there exists a basis for $M_L(\rho)$ consisting of integer weight vector valued modular forms such that the matrix of $D$ in this basis contains only entries that are multiples of pure monomials of the form $E_4^x$.
\end{thm}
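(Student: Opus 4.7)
The plan is to begin with an arbitrary free basis $F_1, \hdots, F_d$ of $M_L(\rho)$ consisting of integer-weight vvmfs, ordered so that $k_1 \leq \cdots \leq k_d$, and then modify it by a sequence of elementary replacement operations. Writing each entry of the matrix $A = (a_{ij})$ of $D$ in the canonical form $a_{ij} = \sum_{b\geq 0} a_{ij}^{(b)} E_6^b$ with $a_{ij}^{(b)} \in \CC[E_4]$, the theorem reduces to arranging that $a_{ij}^{(b)} = 0$ for all $(i,j)$ and all $b\geq 1$; by weight homogeneity this automatically forces each nonzero $a_{ij}^{(0)}$ to be a $\CC$-multiple of a pure power of $E_4$.

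The cancellation engine is the derivation identity
\[
D(E_4^{\alpha+1} E_6^{N-1}) = -\tfrac{\alpha+1}{3}\, E_4^{\alpha} E_6^N - \tfrac{N-1}{2}\, E_4^{\alpha+3} E_6^{N-2}.
\]
If the top $E_6$-term of $a_{ij}$ is $c E_4^{\alpha} E_6^N$ with $N\geq 1$, then setting $g = -\tfrac{3c}{\alpha+1}\, E_4^{\alpha+1} E_6^{N-1} \in M_{k_j - k_i}$ and performing the elementary replacement $F_j \mapsto F_j - g F_i$ cancels this leading term, strictly reducing $d(a_{ij})$. The task is then to arrange a finite sequence of such replacements that eliminates $E_6$ from every entry simultaneously.

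I would process entries in phases indexed by the weight differential $m = k_j - k_i$, treating all entries $(i,j)$ with $k_j - k_i = m$ in phase $m$ for increasing $m$. The key observation is that the accompanying row and column corrections $g\cdot a_{j\ell}$ and $-g\cdot a_{ki}$ are constrained by the weights of their second factors: whenever $k_\ell \leq k_j - 4$ the entry $a_{j\ell}$ vanishes outright, and when $k_\ell = k_j - 2$ it is a scalar, with analogous statements for $a_{ki}$. Combined with the fact that all entries of strictly smaller weight differential have already been normalized to $E_6$-degree zero by earlier phases, the side effects in every "lower-weight" direction are either zero or carry only the $E_6$-degree of $g$ itself, safely confining any new $E_6$-content to the higher-weight entries that have not yet been processed.

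The main obstacle is the case of entries of $E_6$-degree $\geq 2$, which first arises when $m + 2 \geq 12$: the cancelling form $g$ then itself carries $E_6$-content of degree $N-1 \geq 1$, and its product with an as-yet-unprocessed entry of positive $E_6$-degree can temporarily produce terms of $E_6$-degree larger than what is being cancelled. I expect this is resolved by combining the phase ordering above with a secondary induction on the pair $(N_{\max}(A),\, \#\{(i,j) : d(a_{ij}) = N_{\max}(A)\})$ in lexicographic order: within each phase one lowers the leading $E_6$-degree by one at a time, iterating until all residual contamination is cleaned up, and appeals to the weight-forced vanishings together with the already-normalized low-weight entries to guarantee that only finitely many passes are needed before all entries have $E_6$-degree zero.
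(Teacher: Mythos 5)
Your setup coincides with the paper's: order the generators by weight, use elementary replacement operations $F_j\mapsto F_j-gF_i$, and cancel the top $E_6$-term of an entry of weight $m+2$ via $D(E_4^{\alpha+1}E_6^{N-1})$; the weight bookkeeping of where the row/column side effects land is also correct. The gap is in the order of operations and the termination argument, which is precisely the hard content of this theorem. First, with your phase order (increasing weight differential $m$), cancelling an $E_6^N$-term with $N\geq 2$ uses $g$ of $E_6$-degree $N-1\geq 1$, and the side effects deposit $(\mathrm{const})\cdot g$ into entries of the weight-$m$ diagonal --- entries already normalized in the \emph{previous} phase. So new $E_6$-content is \emph{not} confined to unprocessed higher-weight entries; prior phases get recontaminated, and re-cleaning them cascades both downward and back upward. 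Second, the proposed lexicographic measure $(N_{\max}(A),\#\{(i,j):d(a_{ij})=N_{\max}\})$ need not decrease: as you yourself note, multiplying $g$ (degree $N-1$) against an unprocessed entry of $E_6$-degree $\geq 2$ can produce terms of degree exceeding $N$, i.e.\ the first coordinate can go \emph{up}. ``Iterating until all residual contamination is cleaned up'' therefore has no proof of termination, and the final paragraph is a statement of hope rather than an argument.

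For comparison, the paper resolves exactly this by abandoning the monotone order: it maintains the inductive invariant that the block diagonals of weight $4,6,\ldots,4t$ are already $E_6$-free while the diagonal of weight $4t+2s$ has controlled $E_6$-degree, then cleans the weight-$6t$ diagonal first and works \emph{backwards} through weights $6t-2,6t-4,\ldots$. The point is that the contamination of the intermediate diagonals (between the one being processed and weight $6t$) is mediated only by products $g\cdot a$ with $a$ lying in those already-clean low-weight diagonals, so the contamination has $E_6$-degree exactly $\deg_{E_6}(g)$, which sits inside the inductively maintained bounds; diagonals above weight $6t$ carry no constraint yet and may be polluted freely. Some such carefully chosen non-monotone schedule (or a genuinely different termination invariant) is what your argument is missing.
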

\begin{proof}
Let $(k_1,\ldots, k_d)$ be the $L$-adapted weight profile of $\rho$ and let $r = 1 + \frac{k_d-k_1}{2}$. Let $m_1,\ldots, m_r$ denote the weight multiplicities. Choose free generators $F_j$ for $M(\rho)$ ordered by increasing weight. Hence $F_1,\ldots, F_{m_1}$ are of weight $k_1$,  $F_{m_1+1},\ldots, F_{m_1+m_2}$ are of weight $k_2 = k_1+2$, and so on. Define the matrix $A = (a_{ij})$ of $D$ in this basis by writing $D(F_j) = \sum_{i =1}^{d} a_{ij}F_i$ for all $j$. Then $A$ has the following block shape:
\[
\begin{array}{c|c|c|c|c|c|c|c|c|c|c|}
&m_1&m_2&m_3&m_4&m_5&m_6&m_7&\cdots&m_{r-1}&m_r\\
\hline
m_1&\star &4&6&8&10&12&14&&2r&2r+2\\
\hline
m_2&0&\star&4&6&8&10&12&&2r-2&2r\\
\hline
m_3&\star&0&\star&4&6&8&10&&2r-4&2r-2\\
\hline
m_4&\star&\star&0&\star&4&6&8&&2r-6&2r-4\\
\hline
m_5&\star&\star&\star&0&\star&4&6&&2r-8&2r-6\\
\hline
m_6&\star&\star&\star&\star&0&\star&4&&2r-10&2r-8\\
\hline
m_7&\star&\star&\star&\star&\star&0&\star&&2r-12&2r-10\\
\hline
\vdots&&&&&&&&&&\\
\hline
m_{r-1}&\star&\star&\star&\star&\star&\star&\star&&\star&4\\
\hline
m_r&\star&\star&\star&\star&\star&\star&\star&&0&\star\\
\hline
\end{array}
\]
The $\star$ entries indicate zeros, the integer entries indicate weights of the entries, and the row and column labels $m_j$ indicate the size of the blocks in the block matrix decomposition.

Notice that the weight of diagonal entries of $A$ is constant. We will slowly improve $A$ diagonal by diagonal using elementary replacement operations. Our goal is to use a sequence of elementary replacement operations to find a basis for $M(\rho)$ such that the matrix of $D$ in this basis has entries in $M_k^0$. It will be convenient, to phrase things in a uniform way, to regard $D$ as a matrix with infinitely many columns moving to the right and infinitely many rows moving down. Thus, initially the matrix of $D$ has the form
\[
\begin{array}{c|c|c|c|c|c|c|c|c|c|c|c|c|c}
&m_1&m_2&m_3&m_4&m_5&m_6&m_7&m_8&m_9&m_{10}&m_{11}&m_{12}&\\
\hline
m_1&\star &M_4^0&M_6^1&M_8^0&M_{10}^1&M_{12}^2&M_{14}^1&M_{16}^2&M_{18}^3&M_{20}^2&M_{22}^3&M_{24}^4&\\
\hline
m_2&M_0^0&\star&M_4^0&M_6^1&M_8^0&M_{10}^1&M_{12}^2&M_{14}^1&M_{16}^2&M_{18}^3&M_{20}^2&M_{22}^3&\\
\hline
m_3&\star&M_0^0&\star&M_4^0&M_6^1&M_8^0&M_{10}^1&M_{12}^2&M_{14}^1&M_{16}^2&M_{18}^3&M_{20}^2&\\
\hline
m_4&\star&\star&M_0^0&\star&M_4^0&M_6^1&M_8^0&M_{10}^1&M_{12}^2&M_{14}^1&M_{16}^2&M_{18}^3&\\
\hline
m_5&\star&\star&\star&M_0^0&\star&M_4^0&M_6^1&M_8^0&M_{10}^1&M_{12}^2&M_{14}^1&M_{16}^2&\\
\hline
m_6&\star&\star&\star&\star&M_0^0&\star&M_4^0&M_6^1&M_8^0&M_{10}^1&M_{12}^2&M_{14}^1&\cdots\\
\hline
m_7&\star&\star&\star&\star&\star&M_0^0&\star&M_4^0&M_6^1&M_8^0&M_{10}^1&M_{12}^2&\\
\hline
m_8&\star&\star&\star&\star&\star&\star&M_0^0&\star&M_4^0&M_6^1&M_8^0&M_{10}^1&\\
\hline
m_9&\star&\star&\star&\star&\star&\star&\star&M_0^0&\star&M_4^0&M_6^1&M_8^0&\\
\hline
m_{10}&\star&\star&\star&\star&\star&\star&\star&\star&M_0^0&\star&M_4^0&M_6^1&\\
\hline
m_{11}&\star&\star&\star&\star&\star&\star&\star&\star&\star&M_0^0&\star&M_4^0&\\
\hline
m_{12}&\star&\star&\star&\star&\star&\star&\star&\star&\star&\star&M_0^0&\star&\\
\hline
&&&&&&\vdots&&&&&&&\\
\end{array}
\]
where the $m_j$ labels indicate a block of rows or columns of size $m_j$, a block with an entry of the form $M_k^t$ means that the block matrix contains entries in $M_k^t$, and a $\star$ indicates that weight considerations force the entries in those blocks to be zero.

Our algorithm proceeds by using elementary replacement operations to change block diagonals with entries in $M_k^{t}$ to have entries in $M_k^{t-2}$, but one must take care in how one chooses the diagonals. The rule for choosing which block diagonal to adjust is to start looking from the center diagonal of zeros, and move up until you encouter a pair of adjacent diagonals containing entries in $M_{2+2k}^{t+1}$ and $M_{4+2k}^t$. Then, adjust the $k$th diagonal up from the center, which contains entries in $M_{2+2k}^{t+1}$. Afterward it will contain entries in $M_{2+2k}^{t-1}$, and then the algorithm repeats. This alogrithm will involve some backtracking, and so we must argue that it is possible to do such backtracking without undoing the operations that preceded it.

Let us explain the first step of the algorithm very carefully. Consider one of the $m_i \times m_{i+2}$ block matrices, which contains entries in $M_6^1 = M_6 = \langle E_6\rangle$. Since $D(E_4) = -\frac{1}{3}E_6$, we can replace basis vectors $F$ corresponding with the $(i+2)$th block column of $D$ with basis vectors of the form $F-\alpha E_4G$ for $\alpha \in \CC$ and $G$ some basis vector corresponding with the $m_1$ block of columns, and appropriate choices of $\alpha$ will allow us to ensure that all entries on this block diagonal are in $M_6^0 = 0$. Note that these elementary replacement operations will also affect the $m_{i}\times m_{i+1}$ and $m_{i+1}\times m_{i+2}$ blocks, but it will affect them by adding multiples of $E_4$ to entries. Thus, the result will still lie in $M_4^0 = M_4$. These operations will also affect block diagonals above the weight $6$ diagonal, but we don't care about that at this stage, as we haven't yet performed any simplifications to that part of the matrix. Thus, after all these replacements we reduce to a matrix for $D$ of the form
\[
\begin{array}{c|c|c|c|c|c|c|c|c|c|c|c|c|c}
&m_1&m_2&m_3&m_4&m_5&m_6&m_7&m_8&m_9&m_{10}&m_{11}&m_{12}&\\
\hline
m_1&\star &M_4^0&M_6^0&M_8^0&M_{10}^1&M_{12}^2&M_{14}^1&M_{16}^2&M_{18}^3&M_{20}^2&M_{22}^3&M_{24}^4&\\
\hline
m_2&M_0^0&\star&M_4^0&M_6^0&M_8^0&M_{10}^1&M_{12}^2&M_{14}^1&M_{16}^2&M_{18}^3&M_{20}^2&M_{22}^3&\\
\hline
m_3&\star&M_0^0&\star&M_4^0&M_6^0&M_8^0&M_{10}^1&M_{12}^2&M_{14}^1&M_{16}^2&M_{18}^3&M_{20}^2&\\
\hline
m_4&\star&\star&M_0^0&\star&M_4^0&M_6^0&M_8^0&M_{10}^1&M_{12}^2&M_{14}^1&M_{16}^2&M_{18}^3&\\
\hline
m_5&\star&\star&\star&M_0^0&\star&M_4^0&M_6^0&M_8^0&M_{10}^1&M_{12}^2&M_{14}^1&M_{16}^2&\\
\hline
m_6&\star&\star&\star&\star&M_0^0&\star&M_4^0&M_6^0&M_8^0&M_{10}^1&M_{12}^2&M_{14}^1&\cdots\\
\hline
m_7&\star&\star&\star&\star&\star&M_0^0&\star&M_4^0&M_6^0&M_8^0&M_{10}^1&M_{12}^2&\\
\hline
m_8&\star&\star&\star&\star&\star&\star&M_0^0&\star&M_4^0&M_6^0&M_8^0&M_{10}^1&\\
\hline
m_9&\star&\star&\star&\star&\star&\star&\star&M_0^0&\star&M_4^0&M_6^0&M_8^0&\\
\hline
m_{10}&\star&\star&\star&\star&\star&\star&\star&\star&M_0^0&\star&M_4^0&M_6^0&\\
\hline
m_{11}&\star&\star&\star&\star&\star&\star&\star&\star&\star&M_0^0&\star&M_4^0&\\
\hline
m_{12}&\star&\star&\star&\star&\star&\star&\star&\star&\star&\star&M_0^0&\star&\\
\hline
&&&&&&\vdots&&&&&&&\\
\end{array}
\]

Suppose now by induction that we've found a basis for $M(\rho)$ such that the diagonals of $D$ have entries in the following spaces:
\[
  M_4^0, M_6^0,\ldots,M_{4t}^0, M_{4t+2}^1,M_{4t+4}^2,\ldots, M_{6t-2}^{t-1},M_{6t}^t,M_{6t+2}^{t-1},M_{6t+4}^{t},M_{6t+6}^{t+1},M_{6t+8}^{t},\ldots
\]
where $t \geq 1$. Note that we have not put any restrictions on the $E_6$-degree of entries in the weight $6t$ diagonals and higher. Write entries  $f \in M_{6t}^t$ uniquely in the form $f=\alpha E_6^{t} + g$ for $\alpha \in \CC$ and $g \in M_{6t}^{t-1}$. Then $h = -\frac{\alpha}{2} E_4E_6^{t-1} \in M_{6t-2}^{t-1}$ satisfies $f - D(h) \in M_{6t}^{t-2}$. Thus, if we use such forms $h$ to perform elementary replacement operations, we can force the weight $6t$ diagonal to lie in $M_{6t}^{t-2}$. This will adjust the entries in the weight $6t-2$ diagonal by the various $h$'s that arise, but since these all lie in $M_{6t-2}^{t-1}$, we will not disrupt this diagonal. Similarly, these elementary replacement operations will alter diagonals above the weight $6t$ diagonal, but since we have not put any restrictions on those diagonals yet, such operations are inconsequential for our goal.

Now comes the slightly delicate part: we continue working backwards from the weight $6t-2$ block diagonal to the weight $2t+2$ block diagonal, and the issue is that we've adjusted diagonals from the one under consideration up to the weight $6t$ diagonal. The saving grace is that there are enough diagonals in low weights that do not contain any copies of $E_6$. 

More precisely, suppose that we've reduced to a matrix with diagonals of the form
\[
  M_4^0,\ldots,M_{4t}^0, M_{4t+2}^1,M_{4t+4}^2,\ldots,M_{6t-2j-2}^{t-j+1},M_{6t-2j}^{t-j},M_{6t-2j+2}^{t-j+1},\ldots, M_{6t-4}^{t-2},M_{6t-2}^{t-1},M_{6t}^{t-2},\ldots
\]
When we adjust the weight $6t-2j-2$ diagonal we must be careful not to disrupt the diagonals of weight $6t-2j$ through weight $6t$, since we have reduced the $E_6$ degree of each. However, we can ignore diagonals above this, as we have not put any restrictions on them yet. The elementary replacement operations that we perform in weight $6t-2j-2$ will involve multiples of $h = E_4^xE_6^{t-j} \in M_{6t-2j-4}^{t-j}$. Entries in the weight $6t-2j+2r$ diagonals, for $r = 0,\ldots, j$, will be adjusted by forms in $hM_{2r-4}^{0}$. The $E_6$-degree of zero arises since we have already ensured that the weight $4$ through $4t$ diagonals have $E_6$-degree equal to $0$, and since $j \leq t$, these are the diagonals that affect the diagonals that we're worried about when we perform the elementary replacement operations. Since $hM_{2r-4}^{0} \subseteq M_{6t-2j+2r}^{t-j} \subseteq M_{6t-2j+2r}^{t-j+r}$, we will not undo any of the hard work that we have done between weights $6t-2j-2$ and $6t$. As $j$ increases to $t$, we wind up with a matrix whose sequence of diagonals looks like
\[
  M_4^0,\ldots,M_{4t}^0, M_{4t+2}^0,M_{4t+4}^0,M_{4t+6}^1\ldots, M_{6t-2}^{t-3},M_{6t}^{t-2},M_{6t+2}^{t-1},M_{6t+4}^{t},M_{6t+6}^{t+1},M_{6t+8}^{t},\ldots
\]
And now we can repeat with the weight $6t+6$ diagonal. This proves the Theorem.
\end{proof}

\section{Bounds for weight multiplicities}
\label{s:multiplicitybounds}
\begin{thm}
\label{t:multiplicitybounds}
Let $\rho$ be an irreducible representation of $\SL_2(\ZZ)$ of dimension $\dim \rho \geq 2$. Let $\pi_{\rho}=(m_1,\ldots, m_r)$ denote the weight multiplicity tuple of $\rho$, and define $m_j = 0$ if $j < 1$ or if $j > r$. Then for all $j \geq 1$ we have
\[
  m_j \leq \sum_{t\geq 0}m_{j+1-2t}
\]
and
\[
  m_j \leq \sum_{t \geq 0}m_{j-1+2t}.
\]
In particular, $m_j \leq \frac{1}{2}\dim\rho$ for all $j$.
\end{thm}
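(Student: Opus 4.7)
The plan is to exploit the structural constraint on the matrix of $D$ provided by Theorem \ref{l:goodbasis}. First I would fix a basis $F_1, \ldots, F_d$ of $M(\rho)$ as in that theorem, ordered by increasing weight, and for each $j \in \{1, \ldots, r\}$ let $V_j \subseteq M_{k'_j}(\rho)$ denote the $m_j$-dimensional $\CC$-span of the block of basis vectors of weight $k'_j = k_1 + 2(j-1)$. The key observation is that the matrix entry of $D$ in block row $i$ and block column $j$ is a weight $2(j - i + 1)$ form lying in $\CC[E_4]$; since the only weight $2m$ monomial in $\CC[E_4]$ is $E_4^{m/2}$ when $m$ is even (and is zero otherwise), the nonzero block entries occur only at positions $i = j + 1 - 2\ell$ for $\ell \geq 0$, where such an entry is a scalar multiple of $E_4^\ell$.

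This analysis reduces the first bound to a dimension count. The image $D(V_j)$ sits inside
\[
W_j \df \bigoplus_{\substack{\ell \geq 0 \\ j + 1 - 2\ell \geq 1}} \CC \cdot E_4^\ell \cdot V_{j + 1 - 2\ell}\ \subseteq\ M_{k'_{j+1}}(\rho),
\]
a $\CC$-vector space of dimension $\sum_{t \geq 0} m_{j + 1 - 2t}$. I would then invoke Lemma \ref{lemmaDFs}(a) to conclude that $D$ is injective on $M(\rho)$: a nonzero $F$ with $DF = 0$ would violate the $M$-linear independence of $\{F, DF\}$, which holds because $d \geq 2$. Hence $D|_{V_j} \colon V_j \to W_j$ is a $\CC$-linear injection, giving the first bound $m_j = \dim V_j \leq \dim W_j = \sum_{t \geq 0} m_{j + 1 - 2t}$.

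For the second bound I would rerun this argument on $\rho^\vee$ using a cuspidal choice of exponents. Indeed, $\rho^\vee$ is irreducible of dimension $d \geq 2$, Theorem \ref{l:goodbasis} applies to $M_L(\rho^\vee)$ for any choice of exponents $L$, and $D$ preserves cuspidality, so the preceding argument goes through verbatim for $S(\rho^\vee)$. By Remark \ref{r:duality}, the cuspidal weight profile of $\rho^\vee$ is the reversal $(12 - k_d, \ldots, 12 - k_1)$ of the weight profile of $\rho$, and so its multiplicity sequence is $(m_r, m_{r-1}, \ldots, m_1)$; re-indexing $j \mapsto r + 1 - j$ converts the first bound applied to $S(\rho^\vee)$ into $m_j \leq \sum_{t \geq 0} m_{j - 1 + 2t}$. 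Finally, the inequality $m_j \leq \tfrac{1}{2} d$ drops out of either bound: $\sum_{t \geq 0} m_{j + 1 - 2t}$ is a sum of multiplicities at indices of parity opposite to $j$, so it is bounded above by $d - m_j$, and combined with the first bound this forces $2m_j \leq d$.

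The main conceptual obstacle has already been surmounted in Theorem \ref{l:goodbasis}: without the guarantee that the matrix of $D$ can be chosen free of $E_6$, the image $D(V_j)$ would generally involve contributions from all of $V_1, \ldots, V_{j+1}$, and the simple dimension count above would collapse. Once the $E_6$-free basis is in hand, what remains is bookkeeping together with a Serre-duality twist.
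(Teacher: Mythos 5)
Your proposal is correct and follows essentially the same route as the paper: both rest on the $E_6$-free basis of Theorem \ref{l:goodbasis}, read off from the weight grading that the $j$th block column of the matrix of $D$ has at most $\sum_{t\geq 0} m_{j+1-2t}$ nonzero rows, and conclude via Lemma \ref{lemmaDFs} (the paper phrases this as a kernel vector $F$ with $DF=0$ contradicting irreducibility, you phrase it as injectivity of $D$ on $V_j$ — the same fact), with the second bound obtained by the same duality $\cVbar_k(\rho)^\vee \cong \cSbar_{12-k}(\rho^\vee)$ applied to the cuspidal exponents of $\rho^\vee$. Your explicit derivation of $m_j \leq d/2$ from the parity of the indices appearing in the sum is a slightly fuller justification than the paper gives, but the argument is the same.
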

\begin{proof}
We first explain how to establish the first inequality more generally for $M_L(\rho)$ for any choice of exponents $L$ for $\rho(T)$. Choose a basis for $M_L(\rho)$ as in Theorem \ref{l:goodbasis}, and assume to the contrary that there exists $j$ such that $m_j > \sum_{t \geq 0} m_{j+1-2t}$. Consider the matrix $A$ obtained from the $j$th block column of the matrix of $D$ in the chosen basis, but where we ignore the blocks that are known to be zero by weight considerations. This is a matrix with $\sum_{t\geq 0}m_{j+1-2t}$ rows and $m_j$ columns. Thus, by hypothesis $A$ has a nontrivial kernel consisting of a scalar vector. If $b = (b_v)$ is such a column vector, and if $F_1,\ldots, F_{m_j}$ denote the basis vectors of weight corresponding to the multiplicity $m_j$, then $F = \sum_{v=1}^{m_j} b_{v}F_{v}$ is nonzero and $D(F) = 0$, contradicting the irreducibility of $\rho$ since $\dim \rho \geq 2$ (Lemma \ref{lemmaRrk}).

The second inequality can be deduced from the first by duality, since $\cVbar_k(\rho)^\vee \cong \cSbar_{12-k}(\rho^\vee)$, and since Theorem \ref{l:goodbasis} and Lemma \ref{lemmaRrk} are valid for any choice of exponents.
\end{proof}

\begin{rmk}
 Computational evidence suggests that the stronger three-term inequality $m_{j} \leq m_{j+1}+m_{j-1}$ might hold. This would follow if one could find a basis for $M(\rho)$ such that the matrix of $D$ contains only constants and constant multiples of $E_4$. We were unable to prove this stronger result, save for under two different hypotheses:
\begin{enumerate}
\item If $\rho$ is irreducible and unitarizable, then it's known (see Section 6 of \cite{CandeloriFranc} or that the weight profile consists only or Lemma \ref{p:cyclic} above) that there are at most six multiplicities for $\rho$. In this case the two inequalities of Theorem \ref{t:multiplicitybounds} yield the three-term inequality $m_j \leq m_{j+1}+m_{j-1}$ for $j = 1,\ldots, 6$.

\item If $\rho$ is an irreducible representation and $\sigma$ is the standard representation of $\SL_2(\ZZ)$, then it's easy to relate the weight profiles of $\rho$ and $\rho \otimes \sigma$. Since $\sigma(T)$ has all exponents equal to zero, one has $\cVbar_k(\rho\otimes \sigma) = \cVbar_k(\rho)\otimes \cVbar_0(\sigma)$. In particular, since $\cVbar_0(\sigma) = \cO(1)\oplus \cO(-1)$, if $\cVbar_0(\rho) = \bigoplus_{r=1}^d\cO(-k_r)$, then
\[
  \cVbar_0(\rho\otimes \sigma) = \bigoplus_{r=1}^d\cO(-k_r+1)\oplus \cO(-k_r-1)
\]
Let $m_1,\ldots m_t$ be the multiplicities for $\rho$. Then the multiplicities for $\rho\otimes \sigma$ are $m_1,m_1+m_2,\ldots, m_{t-1}+m_t,m_t$. Thus the three-term inequality for $\rho \otimes\sigma$ boils down to $0\leq m_{j+1}+m_{j-2}$, which is trivially satisfied. This is true regardless of whether the three-term inequality was satisfied by the weight multiplicities of $\rho$.
\end{enumerate}
\end{rmk}

\begin{rmk}
It is worth remarking that in the case of the standard representation $\sigma$ of $\SL_2(\ZZ)$, one has
\begin{equation}
\label{eq:stdrep}
  \cVbar_0(\sigma) \cong \cO(-1)\oplus \cO(1).
\end{equation}
This reflects the fact that $\cVbar_0(\sigma)$ can be identified with the relative homology of the universal elliptic curve over the moduli stack of generalized elliptic curves. A vector valued modular form of minimal weight $-1$ for $\sigma$ is given by 
\[
 F(\tau) = \twovec{2\pi i \tau}{2\pi i}.
\]
The decomposition (\ref{eq:stdrep}) is the Hodge decomposition for the relative homology of the universal elliptic curve, and one might ask to what extent such a relationship holds for other representations of $\SL_2(\ZZ)$. 
\end{rmk}

We end this Section by looking more closely at the bound $m_j\leq d/2$ for weight multiplicities given in Theorem \ref{t:multiplicitybounds}, where $d=\dim\rho$.\ We will show (Lemma \ref{lemmaellebound}) that if $\ell$ is the number of \emph{distinct} weight multiplicities 
and $e$ the \emph{minimum} of the (nonnegative) integers $[d/2]-m_j\ (j\geq 1)$, then $\ell/e\leq 8$.\ We can be more precise
for small $e$.\ First we treat the case $e=0$, where we show that $\ell\leq 3$.
\begin{lem}
\label{lemma2wts} 
Suppose that $\rho$ is irreducible.\ There are exactly 2 distinct weight multiplicities in the weight profile of $\rho$ if, and only if, $\dim\rho =2$.
\end{lem}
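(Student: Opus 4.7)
The plan is to handle the two directions separately. The forward direction is immediate from Example~\ref{exrk4}: every irreducible representation of dimension two has type $(0,2)$, so its weight profile contains exactly the two distinct weights $k_1$ and $k_1+2$.

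For the converse, let $\rho$ be irreducible with exactly two distinct weights in its weight profile, and write $d = \dim\rho \ge 2$. Arguing by contradiction, I assume $d > 2$. Theorem~\ref{t:multiplicitybounds} applied to $\pi_\rho = (m_1,m_2)$ gives $m_1 \le m_2$ and $m_2 \le m_1$, so $m_1 = m_2 = d/2 =: m \ge 2$, and by the no-gap lemma (Lemma~\ref{lemmanogap}) the two weights are $k_1$ and $k_2 = k_1+2$. Choose homogeneous free generators $F_1,\dots,F_d$ for $M(\rho)$ with $F_1,\dots,F_m$ of weight $k_1$ and $F_{m+1},\dots,F_d$ of weight $k_2$, and let $A = (a_{ij})$ be the matrix of $D$ in this basis. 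Since $a_{ij}$ has weight $(\wt F_j) + 2 - (\wt F_i)$, weight considerations alone (using $M_0 = \CC$, $M_2 = 0$, $M_4 = \CC E_4$) pin down the block form
\[
A = \begin{pmatrix} 0 & T E_4 \\ S & 0 \end{pmatrix}
\]
for constant $m \times m$ complex matrices $S$ and $T$; note that Theorem~\ref{l:goodbasis} is not needed here, since the off-diagonal blocks of $A$ sit in weights too small to accommodate $E_6$.

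The main step is to identify $D^2$ on the weight $k_1$ piece. For $v \in \CC^m$ set $F_v = \sum_{i=1}^m v_i F_i$, which is a nonzero weight $k_1$ vvmf whenever $v \neq 0$. Since the relevant coefficients $(Sv)_j$ are constants, the Leibniz rule contributes no derivative terms, and a direct calculation yields $DF_v = \sum_j (Sv)_j F_{m+j}$ and hence $D^2 F_v = E_4 \sum_i (TSv)_i F_i = E_4\, F_{TSv}$. Taking $v$ to be a nonzero eigenvector of the complex $m \times m$ matrix $TS$ with eigenvalue $\lambda \in \CC$, one obtains
\[
(D^2 - \lambda E_4)\, F_v = 0,
\]
so the nonzero vvmf $F_v$ is annihilated by the degree-two element $D^2 - \lambda E_4$ of $R$. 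Lemma~\ref{lemmaRrk}(b) then forces $d \le 2$, contradicting $d > 2$. The only real obstacle is the clean identification of the $D^2$ action, and the saving grace is that $M_2 = 0$ makes the diagonal blocks of $A$ vanish, so iterating $D$ preserves the block-checkerboard pattern long enough for $D^2$ to land back in the weight $k_1$ piece with only a factor of $E_4$ produced; the remainder is then finite-dimensional linear algebra over $\CC$.
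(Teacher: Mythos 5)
Your proof is correct and follows essentially the same route as the paper's: force $m_1=m_2=d/2$, observe that weight considerations make $D^2$ act on the lowest-weight generators as $E_4$ times a constant matrix, and extract an eigenvector satisfying an order-two MLDE, which contradicts irreducibility when $d>2$ via Lemma~\ref{lemmaRrk}. The only (cosmetic) difference is that you compute with the explicit block matrix of $D$ rather than the paper's shortcut of choosing $DF_1,\dots,DF_{d/2}$ as the weight-$(k_0+2)$ generators, which if anything makes the step ``$D^2F_j=E_4\sum_i a_{ij}F_i$'' more transparent.
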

\begin{proof} The result is clear if $\dim\rho=2$, so assume that $m_1, m_2$ are the two weight multiplicities.\
Then we must have $m_1=m_2=d/2$, because neither multiplicity may exceed $d/2$.\  Let $F_1, \hdots, F_{d/2}$ be a basis for the vvmfs of least weight $k_0$.\ Then it is easy to see that
that $DF_1, \hdots, DF_{d/2}$ may be chosen as the free generators of weight $k_1=k_0+2$, so that we have
relations of the form $D^2F_j = E_4\sum_{j=1}^{d/2}a_{ij}F_i\ \ (a_{ij}\in\CC)$.

Let $\lambda$ be an eigenvalue of the matrix of coefficients $(a_{ij})$ corresponding to a nonzero
$F$ in the linear span of the $F_i$s.\ Then we have $D^2F=\lambda E_4F$, so that $F$ satisfies an
order 2 MLDE. Therefore $\dim\rho=2$ because $\rho$ is irreducible.
\end{proof}

\begin{lem}\label{lemmad/2mult} Suppose that $\rho$ is irreducible and some weight multiplicity in the weight profile of $\rho$ is $d/2$.\ Then either $\dim\rho=2$, or the multiplicity profile has the form $(m_1, d/2, m_3)$ (and in particular, there are just $3$ weights).
\end{lem}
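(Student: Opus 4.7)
The plan is to exploit both inequalities of Theorem~\ref{t:multiplicitybounds} together with the no-gap lemma (Lemma~\ref{lemmanogap}), using the observation that both sums on the right-hand sides in Theorem~\ref{t:multiplicitybounds} involve only multiplicities indexed by integers of parity opposite to $j$.

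To set this up, I would let $A$ denote the sum of those $m_i$ with $i \ne j$ and $i$ of the same parity as $j$, and let $B$ denote the sum of those $m_i$ with $i$ of parity opposite to $j$. Since $\sum_i m_i = d$ and $m_j = d/2$, this gives $A + B = d/2$. Both sums $\sum_{t \geq 0} m_{j+1-2t}$ and $\sum_{t \geq 0} m_{j-1+2t}$ are partial sums of $B$: the indices $j+1, j-1, j-3,\ldots$ and $j-1, j+1, j+3,\ldots$ are all of parity opposite to $j$, and any indices falling outside $\{1,\ldots,r\}$ contribute $0$ by convention, so each sum is bounded above by $B$. Either inequality in Theorem~\ref{t:multiplicitybounds} therefore yields $d/2 = m_j \leq B$, which together with $A+B = d/2$ and $A \geq 0$ forces $B = d/2$ and $A = 0$.

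The final step is to invoke the no-gap lemma: $m_i \geq 1$ for every $1 \leq i \leq r$. Thus $A = 0$ forces every index in $\{1,\ldots,r\} \setminus \{j\}$ to have parity opposite to $j$. In particular $j-2 \notin \{1,\ldots,r\}$ and $j+2 \notin \{1,\ldots,r\}$, giving $j \leq 2$ and $j \geq r-1$, so $r \leq 3$. If $r \leq 2$ then Lemma~\ref{lemma2wts} yields $\dim \rho = 2$; if $r = 3$ then $j = 2$ and the profile is $(m_1, d/2, m_3)$, as claimed.

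The main conceptual point is to recognize that the two seemingly asymmetric bounds in Theorem~\ref{t:multiplicitybounds} together effectively say that $m_j \leq B$, and that the maximality $m_j = d/2$ forces the same-parity contribution $A$ to vanish. After that, the no-gap lemma mechanically collapses the profile to length at most three. I do not anticipate a real obstacle, since both inputs (the multiplicity bounds and the no-gap lemma) are clean and already in hand; the proof should be short.
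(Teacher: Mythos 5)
Your proof is correct and follows essentially the same route as the paper: one of the two inequalities of Theorem~\ref{t:multiplicitybounds}, combined with $\sum_i m_i = d$ and $m_j = d/2$, forces all other same-parity multiplicities to vanish, and the no-gap lemma then collapses the profile to $r \leq 3$, with Lemma~\ref{lemma2wts} handling $r=2$. The only cosmetic difference is in the $r=3$ endgame, where you pin down $j=2$ directly from $j \leq 2$ and $j \geq r-1$, while the paper instead rules out $m_1 = d/2$ and $m_3 = d/2$ via $m_1 \leq m_2$.
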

\begin{proof} Let $m_j=d/2$.\ By the inequality of Theorem \ref{t:multiplicitybounds} we have
\begin{eqnarray*}
d/2\leq \sum_{t\geq -1} m_{j-1-2t}\leq d/2.
\end{eqnarray*}
Therefore, \emph{all} nonzero weight multiplicities already appear in the displayed inequalities.\ By the no-gap Lemma,
there must be  either 2 or 3 nonzero multiplicities, and if there are 2 then $\dim\rho = 2$ by
Lemma \ref{lemma2wts}.\ If there are 3 then we cannot have $m_1=d/2$ because $m_1\leq m_2$,
and similarly $m_3=d/2$ is ruled out.\ Therefore $m_2=d/2$, and the Lemma is proved.
\end{proof}

It is evident that the argument of the last Lemma can be systematized.\ The general idea is that
the inequality of Theorem \ref{t:multiplicitybounds} involves mainly multiplicities $m_{j-1-2t}$ (the point being that the subscripts have the same parity), whereas the no-gap Lemma says that there must also
be (nonzero) multiplicities for the intermediate multiplicities $m_{j'}$ with $j'\equiv j$ (mod $2$).\ In the general case we can argue as follows.\
For each weight multiplicity $m_j$, define $e_j:= [d/2]-m_j\geq 0$.\ By Theorem
\ref{t:multiplicitybounds} we have
\begin{eqnarray*}
m_j+ \sum_{t\geq -1} m_{j-1-2t}\geq 2m_j = 2[d/2]-2e_j.
\end{eqnarray*}
The number of integers $j'$ in the range $[1, j-2]$ satisfying  $j'\equiv j$ (mod $2$)
is $[j-1/2]$.\ By the no-gap Lemma
we have $m_{j'}\geq 1$ for these $j'$, whence we obtain
\begin{eqnarray*}
d = \sum_j m_j \geq [j-1/2]+2[d/2]-2e_j.
\end{eqnarray*}
This implies that
\begin{eqnarray*}
j\leq 4(e_j+1).
\end{eqnarray*}
In a nutshell, if we have a multiplicity $m_j$ that is `not too far' from $d/2$
(i.e., $e_j$ is small) then $j$ must be small too.\ For example, if
some $m_j=d/2\Rightarrow e_j=0\Rightarrow j\leq 1$ (because $d$ is even), and we easily recover the results of Lemma \ref{lemmad/2mult} in this case.

Let $e\df \min_j\ {e_j}$ be as before, with $e = e_{j_0}$.\ There may be several such $j_0$, but they all satisfy
 $j_0\leq 4(e+1)$.\ By duality, all of these arguments
apply to the cuspidal weight profiles too, and we know that in these cases the weight multiplicities are
reversed upon passing from $\rho$ to $\rho^{\vee}$ (cf. Remark \ref{r:duality}).\ Moreover, $e$ is the same for
$\rho$ and $\rho^{\vee}$. Therefore, not only must 
the minimum discrepancy $e$ occur by the time we reach the $4(e+1)^{th}$ weight multiplicity, the last time
the minimum discrepancy occurs must be
within the same distance of the highest weight.\ Therefore, as there are exactly $\ell$ distinct weight multiplicities,
then $\ell \leq 8e+7$.\ We state this as
\begin{lem}\label{lemmaellebound} Let $\rho$ be irreducible and suppose that $\pi_{\rho}=(m_1, \hdots, m_{\ell})$.\ Let $e$ be the minimum value of $[d/2]-m_j\ (1\leq j \leq \ell)$.\ Then
\begin{eqnarray*}
\ell \leq 8e+7.
\end{eqnarray*}
$\hfill \Box$
\end{lem}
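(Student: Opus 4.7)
The plan is to combine the two inequalities of Theorem \ref{t:multiplicitybounds} with the no-gap lemma (Lemma \ref{lemmanogap}) to show that any index $j$ achieving the minimum discrepancy $e_j = e$ must simultaneously lie close to both ends of the range $[1, \ell]$; the bound $\ell \leq 8e + 7$ then follows immediately by addition. This reorganizes the informal discussion in the paragraph preceding the lemma into three clean steps.

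First I fix any $j$ and double the first inequality of Theorem \ref{t:multiplicitybounds}, obtaining
$$2[d/2] - 2e_j \;=\; 2 m_j \;\leq\; m_j + \sum_{t \geq 0} m_{j+1-2t}.$$
The indices appearing on the right are $j$ together with the $j' \leq j+1$ of parity opposite to $j$, so every same-parity index in $[1, j-2]$ is missing; there are $[(j-1)/2]$ such indices, and each contributes at least $1$ to $d = \sum_i m_i$ by the no-gap lemma. This yields $d \geq 2[d/2] - 2e_j + [(j-1)/2]$, which after a short parity check gives the one-sided bound $j \leq 4(e_j + 1)$ uniformly in the parity of $d$.

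Next I would apply the same argument to the second inequality $m_j \leq \sum_{t \geq 0} m_{j-1+2t}$: the missing same-parity indices now sit in $[j+2, \ell]$ and number $[(\ell-j)/2]$, so the same accounting yields the symmetric bound $\ell - j + 1 \leq 4(e_j + 1)$. Alternatively, since Remark \ref{r:duality} tells us that $\pi_{\rho^\vee}$ is the reversal of $\pi_\rho$ and $e$ is unchanged under passing to the dual, one can instead apply the first bound to $\rho^\vee$ and obtain the same inequality.

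Finally, I choose $j$ with $e_j = e$ and add the two one-sided bounds to get $\ell + 1 \leq 8(e + 1)$, that is, $\ell \leq 8e + 7$. The only delicate point, and really the main (if minor) obstacle, is the floor-function bookkeeping: because the remainder $d - 2[d/2] \in \{0, 1\}$ shows up on the right, one has to verify that the constant $4(e_j + 1)$ works in both parities of $d$, even though the even-$d$ case actually yields a slightly tighter bound. This is settled by splitting into cases of even and odd $d$ and checking both.
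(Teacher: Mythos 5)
Your proof is correct and follows essentially the same route as the paper: doubling the multiplicity inequality of Theorem \ref{t:multiplicitybounds}, counting the omitted same-parity indices via the no-gap lemma to get $j \leq 4(e_j+1)$, and then obtaining the symmetric one-sided bound (the paper invokes the duality of Remark \ref{r:duality}; your direct use of the second inequality is an equivalent shortcut) before adding the two. The floor-function bookkeeping you flag does work out as claimed, since $d - 2[d/2] \leq 1$ gives $[(j-1)/2] \leq 2e_j + 1$ in either parity.
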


\section{Multiplicity tables in low dimensions}
\label{s:examples}

The following lists of multiplicity profiles $\pi_\rho$ for irreducible representations $\rho$ of $\SL_2(\ZZ)$ were generated by a computer using the results discussed in Section \ref{s:weightprofiles}, the no-gap lemma (Lemma \ref{lemmanogap}), and Theorem \ref{t:multiplicitybounds}.\ They contain all multiplicity profiles that can arise from irreducible representations in dimensions six through ten, but our lists may include some examples that do not occur in practice\footnote{Since we do not have explicit equations for the character variety of $\SL_2(\ZZ)$ in dimensions six or greater, we do not know that there in fact exist representations $\rho$ of $\SL_2(\ZZ)$ having all of the possible prescribed values for $\Tr(\rho(R))$, $\Tr(\rho(S))$ and $\Tr(L)$ satisfying the obvious constraints.}. In each dimension there is a unique multiplicity tuple of length $d$, all of whose entries are one.\ This corresponds to the case of cyclic $\rho$ (Lemma \ref{lemmacyclicequivs}).\  Similarly, in dimension $d \geq 4$ there are $d-3$ tuples of length $d-2$, all entries of which are one save for a single two (which cannot occur in the first or last entries). We omit these from our lists in dimension seven and higher in order to fit the data within the margins.

\subsection{$d = 6$}
Total number of types: $\leq 10$.

\begin{center}
\begin{tabular}{|c|c|c|c|}
$[m_1,\ldots ,m_6]$ & $[m_1,\ldots,m_5]$&$[m_1,\ldots ,m_4]$ & $[m_1,m_2,m_3]$ \\
\hline
$\left[1, 1, 1, 1,1,1\right]$ &$\left[1, 1, 1, 2, 1\right]$&$\left[1, 1, 2, 2\right]$ & $\left[1, 3, 2\right]$ \\
&$\left[1,1, 2, 1, 1\right]$&$ \left[1, 2, 2, 1\right] $&$\left[2, 2, 2\right] $\\
&$\left[1,2, 1, 1, 1\right]$&$ \left[2, 2, 1, 1\right] $&$ \left[2, 3, 1\right] $ 
\end{tabular}
\end{center}

\subsection{$d = 7$}
Total number of types: $\leq 19$. 

\begin{center}
\begin{tabular}{|c|c|c|}
$[m_1,\ldots,m_5]$&$[m_1,\ldots,m_4]$&$[m_1,m_2,m_3]$\\
\hline
$\left[1, 1, 1, 2, 2\right] $&$\left[1, 1, 3, 2\right] $&$ \left[1, 3, 3\right] $\\
$\left[1, 1, 2, 2, 1\right] $&$\left[1, 2, 2, 2\right] $&$ \left[2, 3, 2\right] $\\
$\left[1, 2, 1, 2, 1\right] $&$\left[1, 2, 3, 1\right] $&$ \left[3, 3, 1\right] $\\
$[1,2,2,1,1]$&$\left[1, 3, 2, 1\right]$&\\
$ \left[2, 2, 1, 1, 1\right] $&$ \left[2, 2, 2, 1\right] $&\\
&$ \left[2, 3, 1, 1\right] $&
\end{tabular}
\end{center}

\subsection{$d = 8$}
Total number of types: $\leq 38$

\begin{center}
\begin{tabular}{|c|c|c|c|}
$[m_1,\ldots, m_6]$&$[m_1,\ldots, m_5]$&$[m_1,\ldots, m_4]$&$[m_1,m_2, m_3]$\\
\hline
$\left[1, 1, 1, 1, 2, 2\right]$&$\left[1, 1, 1, 3, 2\right]$&$\left[1, 1, 3, 3\right]$&$\left[1, 4, 3\right]$\\
$\left[1, 1, 1, 2, 2, 1\right]$&$\left[1, 1, 2, 2, 2\right]$&$\left[1, 2, 3, 2\right]$&$\left[2, 3, 3\right]$\\
$\left[1, 1, 2, 1, 2, 1\right]$&$\left[1, 1, 2, 3, 1\right]$&$\left[1, 3, 2, 2\right]$&$\left[2, 4, 2\right]$\\
$\left[1, 1, 2, 2, 1, 1\right]$&$\left[1, 1, 3, 2, 1\right]$&$\left[1, 3, 3, 1\right]$&$\left[3, 3, 2\right]$\\
$\left[1, 2, 1, 1, 2, 1\right]$&$\left[1, 2, 1, 2, 2\right]$&$\left[2, 2, 2, 2\right]$&$\left[3, 4, 1\right]$\\
$\left[1, 2, 1, 2, 1, 1\right]$&$\left[1, 2, 2, 2, 1\right]$&$\left[2, 2, 3, 1\right]$&\\
$\left[1, 2, 2, 1, 1, 1\right]$&$\left[1, 2, 3, 1, 1\right]$&$\left[2, 3, 2, 1\right]$&\\
$\left[2, 2, 1, 1, 1, 1\right]$&$\left[1, 3, 2, 1, 1\right]$&$\left[3, 3, 1, 1\right]$&\\
&$\left[2, 2, 1, 2, 1\right]$&&\\
&$\left[2, 2, 2, 1, 1\right]$&&\\
&$\left[2, 3, 1, 1, 1\right]$&&
\end{tabular}
\end{center}

\newpage

\subsection{$d = 9$}
Total number of types: $\leq 72$
\begin{center}
\begin{tabular}{|c|c|c|c|c|}
$[m_1,\ldots,m_7]$&$[m_1,\ldots,m_6]$&$[m_1,\ldots,m_5]$&$[m_1,\ldots,m_4]$&$[m_1,m_2,m_3]$\\
\hline
$\left[1, 1, 1, 1, 1, 2, 2\right]$&$\left[1, 1, 1, 1, 3, 2\right]$&$ \left[1, 1, 1, 3, 3\right] $&$\left[1, 1, 4, 3\right]$&$\left[1, 4, 4\right]$\\
$\left[1, 1, 1, 1, 1, 3, 1\right]$&$\left[1, 1, 1, 2, 2, 2\right]$&$\left[1, 1, 2, 3, 2\right]$&$\left[1, 2, 3, 3\right]$&$\left[2, 4, 3\right]$\\
$\left[1, 1, 1, 1, 2, 2, 1\right]$&$\left[1, 1, 1, 2, 3, 1\right]$&$\left[1, 1, 3, 2, 2\right]$&$\left[1, 2, 4, 2\right]$&$\left[3, 3, 3\right]$\\
$\left[1, 1, 1, 2, 1, 2, 1\right]$&$\left[1, 1, 1, 3, 2, 1\right]$&$\left[1, 1, 3, 3, 1\right]$&$\left[1, 3, 3, 2\right]$&$\left[3, 4, 2\right]$\\
$\left[1, 1, 1, 2, 2, 1, 1\right]$&$\left[1, 1, 2, 1, 2, 2\right]$&$\left[1, 2, 1, 3, 2\right]$&$\left[1, 3, 4, 1\right]$&$\left[4, 4, 1\right]$\\
$\left[1, 1, 2, 1, 1, 2, 1\right]$&$\left[1, 1, 2, 2, 2, 1\right]$&$\left[1, 2, 2, 2, 2\right]$&$\left[1, 4, 3, 1\right]$& \\
$\left[1, 1, 2, 1, 2, 1, 1\right]$&$\left[1, 1, 2, 3, 1, 1\right]$&$\left[1, 2, 2, 3, 1\right]$&$\left[2, 2, 3, 2\right]$& \\
$\left[1, 1, 2, 2, 1, 1, 1\right]$&$\left[1, 1, 3, 2, 1, 1\right]$&$\left[1, 2, 3, 2, 1\right]$&$\left[2, 3, 2, 2\right]$& \\
$\left[1, 2, 1, 1, 1, 2, 1\right]$&$\left[1, 2, 1, 1, 2, 2\right]$&$\left[1, 3, 2, 2, 1\right]$&$\left[2, 3, 3, 1\right]$& \\
$\left[1, 2, 1, 1, 2, 1, 1\right]$&$\left[1, 2, 1, 2, 2, 1\right]$&$\left[1, 3, 3, 1, 1\right]$&$\left[2, 4, 2, 1\right]$& \\
$\left[1, 2, 1, 2, 1, 1, 1\right]$&$\left[1, 2, 2, 1, 2, 1\right]$&$\left[2, 2, 1, 2, 2\right]$&$\left[3, 3, 2, 1\right]$& \\
$\left[1, 2, 2, 1, 1, 1, 1\right]$&$\left[1, 2, 2, 2, 1, 1\right]$&$\left[2, 2, 2, 2, 1\right]$&$\left[3, 4, 1, 1\right]$& \\
$\left[1, 3, 1, 1, 1, 1, 1\right]$&$\left[1, 2, 3, 1, 1, 1\right]$&$\left[2, 2, 3, 1, 1\right]$&& \\
$\left[2, 2, 1, 1, 1, 1, 1\right]$&$\left[1, 3, 2, 1, 1, 1\right]$&$\left[2, 3, 1, 2, 1\right]$&&\\
&$\left[2, 2, 1, 1, 2, 1\right]$&$\left[2, 3, 2, 1, 1\right]$&& \\
&$\left[2, 2, 1, 2, 1, 1\right]$&$\left[3, 3, 1, 1, 1\right]$&& \\
&$\left[2, 2, 2, 1, 1, 1\right]$&&& \\
&$\left[2, 3, 1, 1, 1, 1\right]$&&&
\end{tabular}
\end{center}

\newpage
\subsection{$d = 10$}
Total number of types: $\leq 142$
\begin{center}
\begin{tabular}{|c|c|c|c|c|c|}
$[m_1,\ldots,m_8]$&$[m_1,\ldots,m_7]$&$[m_1,\ldots,m_6]$&$[m_1,\ldots,m_5]$&$[m_1,\ldots,m_4]$&$[m_1,m_2,m_3]$\\
\hline
$\left[1, 1, 1, 1, 1, 1, 2, 2\right]$&$\left[1, 1, 1, 1, 1, 3, 2\right]$&$\left[1, 1, 1, 1, 3, 3\right]$&$\left[1, 1, 1, 4, 3\right]$&$\left[1, 1, 4, 4\right]$&$\left[1, 5, 4\right]$ \\
$\left[1, 1, 1, 1, 1, 2, 2, 1\right]$&$\left[1, 1, 1, 1, 2, 2, 2\right]$&$\left[1, 1, 1, 2, 3, 2\right]$&$\left[1, 1, 2, 3, 3\right]$&$\left[1, 2, 4, 3\right]$&$\left[2, 4, 4\right]$ \\
$\left[1, 1, 1, 1, 1, 3, 1, 1\right]$&$\left[1, 1, 1, 1, 2, 3, 1\right]$&$\left[1, 1, 1, 3, 2, 2\right]$&$\left[1, 1, 2, 4, 2\right]$&$\left[1, 3, 3, 3\right]$&$\left[2, 5, 3\right]$ \\
$\left[1, 1, 1, 1, 2, 1, 2, 1\right]$&$\left[1, 1, 1, 1, 3, 2, 1\right]$&$\left[1, 1, 1, 3, 3, 1\right]$&$\left[1, 1, 3, 3, 2\right]$&$\left[1, 3, 4, 2\right]$&$\left[3, 4, 3\right]$ \\
$\left[1, 1, 1, 1, 2, 2, 1, 1\right]$&$\left[1, 1, 1, 2, 1, 2, 2\right]$&$\left[1, 1, 2, 1, 3, 2\right]$&$\left[1, 1, 3, 4, 1\right]$&$\left[1, 4, 3, 2\right]$&$\left[3, 5, 2\right]$\\
$\left[1, 1, 1, 2, 1, 1, 2, 1\right]$&$\left[1, 1, 1, 2, 1, 3, 1\right]$&$\left[1, 1, 2, 2, 2, 2\right]$&$\left[1, 1, 4, 3, 1\right]$&$\left[1, 4, 4, 1\right]$&$\left[4, 4, 2\right]$ \\
$\left[1, 1, 1, 2, 1, 2, 1, 1\right]$&$\left[1, 1, 1, 2, 2, 2, 1\right]$&$\left[1, 1, 2, 2, 3, 1\right]$&$\left[1, 2, 1, 3, 3\right]$&$\left[2, 2, 3, 3\right]$&$\left[4, 5, 1\right]$ \\
$\left[1, 1, 1, 2, 2, 1, 1, 1\right]$&$\left[1, 1, 1, 2, 3, 1, 1\right]$&$\left[1, 1, 2, 3, 2, 1\right]$&$\left[1, 2, 2, 3, 2\right]$&$\left[2, 2, 4, 2\right]$& \\
$\left[1, 1, 2, 1, 1, 1, 2, 1\right]$&$\left[1, 1, 1, 3, 2, 1, 1\right]$&$\left[1, 1, 3, 2, 2, 1\right]$&$\left[1, 2, 3, 2, 2\right]$&$\left[2, 3, 3, 2\right]$& \\
$\left[1, 1, 2, 1, 1, 2, 1, 1\right]$&$\left[1, 1, 2, 1, 1, 2, 2\right]$&$\left[1, 1, 3, 3, 1, 1\right]$&$\left[1, 2, 3, 3, 1\right]$&$\left[2, 3, 4, 1\right]$&\\
$\left[1, 1, 2, 1, 2, 1, 1, 1\right]$&$\left[1, 1, 2, 1, 1, 3, 1\right]$&$\left[1, 2, 1, 1, 3, 2\right]$&$\left[1, 2, 4, 2, 1\right]$&$\left[2, 4, 2, 2\right]$&\\
$\left[1, 1, 2, 2, 1, 1, 1, 1\right]$&$\left[1, 1, 2, 1, 2, 2, 1\right]$&$\left[1, 2, 1, 2, 2, 2\right]$&$\left[1, 3, 2, 2, 2\right]$&$\left[2, 4, 3, 1\right]$&\\
$\left[1, 1, 3, 1, 1, 1, 1, 1\right]$&$\left[1, 1, 2, 2, 1, 2, 1\right]$&$\left[1, 2, 1, 2, 3, 1\right]$&$\left[1, 3, 2, 3, 1\right]$&$\left[3, 3, 2, 2\right]$&\\
$\left[1, 2, 1, 1, 1, 1, 2, 1\right]$&$\left[1, 1, 2, 2, 2, 1, 1\right]$&$\left[1, 2, 1, 3, 2, 1\right]$&$\left[1, 3, 3, 2, 1\right]$&$\left[3, 3, 3, 1\right]$& \\
$\left[1, 2, 1, 1, 1, 2, 1, 1\right]$&$\left[1, 1, 2, 3, 1, 1, 1\right]$&$\left[1, 2, 2, 1, 2, 2\right]$&$\left[1, 3, 4, 1, 1\right]$&$\left[3, 4, 2, 1\right]$&\\
$\left[1, 2, 1, 1, 2, 1, 1, 1\right]$&$\left[1, 1, 3, 2, 1, 1, 1\right]$&$\left[1, 2, 2, 2, 2, 1\right]$&$\left[1, 4, 3, 1, 1\right]$&$\left[4, 4, 1, 1\right]$&\\
$\left[1, 2, 1, 2, 1, 1, 1, 1\right]$&$\left[1, 2, 1, 1, 1, 2, 2\right]$&$\left[1, 2, 2, 3, 1, 1\right]$&$\left[2, 2, 1, 3, 2\right]$&& \\
$\left[1, 2, 2, 1, 1, 1, 1, 1\right]$&$\left[1, 2, 1, 1, 1, 3, 1\right]$&$\left[1, 2, 3, 1, 2, 1\right]$&$\left[2, 2, 2, 2, 2\right]$&& \\
$\left[2, 2, 1, 1, 1, 1, 1, 1\right]$&$\left[1, 2, 1, 1, 2, 2, 1\right]$&$\left[1, 2, 3, 2, 1, 1\right]$&$\left[2, 2, 2, 3, 1\right]$&& \\
&$\left[1, 2, 1, 2, 1, 2, 1\right]$&$\left[1, 3, 2, 2, 1, 1\right]$&$\left[2, 2, 3, 2, 1\right]$&&\\
&$\left[1, 2, 1, 2, 2, 1, 1\right]$&$\left[1, 3, 3, 1, 1, 1\right]$&$\left[2, 3, 1, 2, 2\right]$&&\\
&$\left[1, 2, 2, 1, 1, 2, 1\right]$&$\left[2, 2, 1, 1, 2, 2\right]$&$\left[2, 3, 2, 2, 1\right]$&&\\
&$\left[1, 2, 2, 1, 2, 1, 1\right]$&$\left[2, 2, 1, 2, 2, 1\right]$&$\left[2, 3, 3, 1, 1\right]$&&\\
&$\left[1, 2, 2, 2, 1, 1, 1\right]$&$\left[2, 2, 2, 1, 2, 1\right]$&$\left[2, 4, 2, 1, 1\right]$&&\\
&$\left[1, 2, 3, 1, 1, 1, 1\right]$&$\left[2, 2, 2, 2, 1, 1\right]$&$\left[3, 3, 1, 2, 1\right]$&&\\
&$\left[1, 3, 1, 1, 1, 2, 1\right]$&$\left[2, 2, 3, 1, 1, 1\right]$&$\left[3, 3, 2, 1, 1\right]$&&\\
&$\left[1, 3, 1, 1, 2, 1, 1\right]$&$\left[2, 3, 1, 1, 2, 1\right]$&$\left[3, 4, 1, 1, 1\right]$&&\\
&$\left[1, 3, 1, 2, 1, 1, 1\right]$&$\left[2, 3, 1, 2, 1, 1\right]$&&&\\
&$\left[1, 3, 2, 1, 1, 1, 1\right]$&$\left[2, 3, 2, 1, 1, 1\right]$&&&\\
&$\left[2, 2, 1, 1, 1, 2, 1\right]$&$\left[3, 3, 1, 1, 1, 1\right]$&&&\\
&$\left[2, 2, 1, 1, 2, 1, 1\right]$&&&&\\
&$\left[2, 2, 1, 2, 1, 1, 1\right]$&&&&\\
&$\left[2, 2, 2, 1, 1, 1, 1\right]$&&&&\\
&$\left[2, 3, 1, 1, 1, 1, 1\right]$&&&&
\end{tabular}
\end{center}

\bibliographystyle{plain}

\end{document}